\newtheorem{theorem}{Theorem}
\newtheorem{proposition}{Proposition}
\newtheorem{definition}{Definition}
\newtheorem{example}{Example}
\newtheorem{corollary}{Corollary}
\newtheorem{remark}{Remark}
\title{Stretched non-positive Weyl connections on solvable Lie groups}
\author{Maciej Boche\'nski, Piotr Jastrz\c ebski and Aleksy Tralle}
\begin{document}
\maketitle
\abstract{We determine the structure of solvable Lie groups endowed with invariant stretched non-positive Weyl connections and find classes of solvable Lie groups admitting and not admitting such connections. In dimension 4 we fully classify solvable Lie groups and  compact solvmanifolds which admit invariant SNP Weyl connections}
\vskip6pt
\noindent {\it Keywords:} invariant Weyl connection, solvable Lie algebra, solvmanifold, non-positive curvature
\vskip6pt
\noindent {\it AMS Subject Classification:}  22E25, 53C30
\section{Introduction}
In \cite{TW} the authors deal with Weyl connections. 
By definition \cite{F}, a Weyl connection $\hat\nabla$ on a manifold $M$ is determined by a 1-form $\varphi$ satisfying the equality
$$\hat\nabla_Xg=-2\varphi(X)g.$$
Since the difference between two connections is a tensor field, one derives the formula
$$\hat\nabla_XY=\nabla_XY-\varphi(Y)X-\varphi(X)Y+g(X,Y)E,$$
where $E$ is dual to $\varphi$ and $\nabla$ is the Levi-Civita connection. Weyl connections locally coincide with the Levi-Civita connections of the metrics in the same conformal class. It was shown by Gauduchon \cite{G} that for a compact manifold $M$ there exists a unique metric in the conformal class such that the respective vector field is divergent free. In this case, one cosiders the Gauduchon representation $(g,E)$ of the Weyl connection. 
Weyl connections are used in the study of Gaussian thermostats. A Gaussian thermostat is an ordinary differential equation on the tangent bundle to a Riemannian manifold of the form
\begin{equation} {dx\over dt}=v,\nabla_vv=E-{g(E,v)\over g(v,v)}v, 
\end{equation}\label{eq-therm}
where $x(t)$ is a parametrized curve in $M$, $v\in TM$.
The trajectories of the Gaussian thermostat are geodesics of a Weyl connection \cite{PW}, they are also called {\it thermostat geodesics} \cite{AR}. Gaussian thermostats were introduced in \cite{H} and were used (among other studies) in creating interesting models in nonequilibrium statistical mechanics. In these models, the vector field $E$ contributes to the kinetic energy, while the second term of the thermostat equation retains the kinetic energy to be constant. In this paper we treat Gaussian thermostats from the geometric point of view, as a geometric structure $(M,g,E)$ satisfying (\ref{eq-therm}).

In \cite{TW} the authors pose the problem of studying invariant Weyl connections on homogeneous spaces. One of the motivations for this study came from \cite{BR}, \cite{R} and \cite{W}. It was shown in \cite{W}, that if the curvature of the Weyl connection is negative, then the Gaussian thermostat has some kind of "hyperbolic properties" of the geodesic flow (see also \cite{PW}), and this was in accordance with models of nonequilibrium statistical mechanics and even more general dynamical properties, like {\it A-axiom} \cite{BR},\cite{R}.  Thus, looking for examples of such thermostats, it is natural to begin with the invariant case.
The method described in \cite{TW} is a s follows. Let $G/H$ be a homogeneous Riemannian manifold with an orthogonal decomposion $\mathfrak{g}=\mathfrak{h}+\mathfrak{p}$. Let 
$$\mathfrak{p}_0=\{X\in\mathfrak{p}\,| [Y,X]=0\,\forall Y\in\mathfrak{h}\}.$$
Note that if $\mathfrak{h}=0$, then $\mathfrak{p}_0=\mathfrak{g}$ and  this is  the case considered in \cite{TW} in greater detail. The projection of an invariant vector field  on $G$  determined by  $E\in\mathfrak{p}_0$ defines a Weyl connection on $G/H$ (see \cite{TW}). 
Note, however, that there are no known examples of Weyl connections  of {\it negative} curvature on compact manifolds $M$ of dimension $\geq 3$. On the other hand, Gaussian thermostats were studied on surfaces of negative curvature \cite{AR}. Therefore,  one looks for  conditions  with features of non-positivity.  One of such assumptions  is the {\it SNP condition}. 
\begin{definition} {\rm  Let $(M,g)$ be a Riemannian manifold. Let $E$ be a vector field and $\gamma E,\gamma>0$ be a family of Weyl connections defined by vector fields $\gamma E$. A vector field $E$ is called {\it streched non-positive} (SNP) if there exists $\gamma_0\geq 0$ such that the Weyl connections defined by the fields $\gamma E$ are non-positive for all $\gamma\geq \gamma_0$. A Weyl connection on a compact Riemannian manifold $M$ is called streched non-positive (SNP) if its Gauduchon representation $E$ is SNP.}
\end{definition}  
Note that the SNP condition is the property of the Gaussian thermostat itself.  It says that there is a non-isolated family  $(M,g,\gamma E)$ of Gaussian thermostats which yields non-positivity for the values of parameter $\gamma$ exceeding some $\gamma_0$. In this sense, the classification of homogeneous spaces with invariant SNP Weyl connections answers the question: {\it what are Gaussian thermostats of the form $(G/H,g,\gamma E)$ which are non-positive for $\gamma\geq\gamma_0$?}  
Some necessary conditions (for general Riemannian manifolds) which imply SNP are found in \cite{TW}, Propositions 3.1 and 3.2. Note that some of them are subject to the condition $\operatorname{div}E=0$, and some not.  We will use the following.
\begin{proposition}\label{prop:snp-gen} If a Weyl connection  determined by a divergence-free vector field $E$ on a compact manifold is SNP, then
\begin{itemize}
\item{\rm (W1)} $K(\Pi)\leq 0$ for every plane $\Pi$ containing $E\not=0$,
\item {\rm (W2)} $\langle\nabla_YE,Y\rangle=0$ for every $Y\perp E$, $E\not=0$,
\end{itemize}
\end{proposition}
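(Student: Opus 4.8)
The plan is to reduce non-positivity of the Weyl connection attached to $\gamma E$ to a single \emph{affine} inequality in $\gamma$ along planes through $E$, and then to exploit $\operatorname{div}E=0$ together with compactness. First I would write $\hat\nabla^\gamma=\nabla+\gamma A$, where $A$ is the difference tensor
$$A_XY=-\varphi(Y)X-\varphi(X)Y+g(X,Y)E,\qquad \varphi=g(E,\cdot),$$
so that the curvature expands as the degree-two polynomial
$$\hat R^\gamma(X,Y)Z=R(X,Y)Z+\gamma\big[(\nabla_XA)_YZ-(\nabla_YA)_XZ\big]+\gamma^2\big[A_XA_YZ-A_YA_XZ\big].$$
The computational heart is to evaluate this on a plane $\Pi=\operatorname{span}(E,X)$ with $X\perp E$, $|X|=1$. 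Two observations do the work: first, for $X\perp E$ one has $A_XE=A_EX=-|E|^2X$ and $A_EE=-|E|^2E$, whence $A_XA_EE-A_EA_XE=0$ and the $\gamma^2$-term of $\hat R^\gamma(X,E)E$ vanishes identically; second, $(\nabla_WA)$ has the same algebraic shape as $A$ with $E$ replaced by $\nabla_WE$. Carrying out the contraction with $X$ I expect
$$\langle\hat R^\gamma(X,E)E,X\rangle=\langle R(X,E)E,X\rangle+\gamma f(X),\qquad f(X)=|E|^2\langle\nabla_XE,X\rangle+g(\nabla_EE,E),$$
i.e.\ the $g$-sectional curvature of $\hat\nabla^\gamma$ on $\Pi$ is affine in $\gamma$ (one first notes this sectional curvature is well defined on $2$-planes even though $\hat R^\gamma$ is not skew in its last two slots).

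Next I would feed in SNP. Since $\hat\nabla^\gamma$ is non-positive for all $\gamma\ge\gamma_0$, the affine function above is $\le0$ for all large $\gamma$, so its slope satisfies $f(X)\le0$ for every unit $X\perp E$. In an orthonormal frame with first vector $E/|E|$, the hypothesis $\operatorname{div}E=0$ gives $\sum_{i\ge2}\langle\nabla_{e_i}E,e_i\rangle=-|E|^{-2}g(\nabla_EE,E)$; averaging $f(X)\le0$ over the unit sphere of $E^\perp$ then yields $\tfrac{n-2}{n-1}\,g(\nabla_EE,E)\le0$, hence (for $n\ge3$) $g(\nabla_EE,E)\le0$ at every point where $E\ne0$, and trivially where $E=0$.

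Then compactness enters: $\int_M g(\nabla_EE,E)=\tfrac12\int_M E(|E|^2)=0$ because $\operatorname{div}E=0$, so the non-positive continuous function $g(\nabla_EE,E)$ vanishes identically. Plugging this back, $f(X)=|E|^2\langle\nabla_XE,X\rangle\le0$ for every unit $X\perp E$, while the trace of $\langle\nabla_\bullet E,\bullet\rangle$ over $E^\perp$ is now zero; numbers that are $\le0$ and sum to $0$ over an orthonormal basis must all vanish, and since any unit $X\perp E$ lies in such a basis we conclude $\langle\nabla_XE,X\rangle=0$ for all $X\perp E$, which is (W2). Finally $f\equiv0$ along $\Pi$, so $\langle\hat R^\gamma(X,E)E,X\rangle=\langle R(X,E)E,X\rangle$, and non-positivity forces $\langle R(X,E)E,X\rangle\le0$, i.e.\ $K(\Pi)\le0$, which is (W1).

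The main obstacle I anticipate is organizational rather than deep: keeping the curvature bookkeeping clean enough to see that the quadratic term dies on a plane through $E$ and to extract exactly $f$. The only genuinely non-local step is the last one, where $\operatorname{div}E=0$ and compactness are spent (via integration by parts) to upgrade the pointwise inequality $g(\nabla_EE,E)\le0$ to an identity; and one should note that the spherical averaging is vacuous when $n=2$, so the surface case needs a separate, direct check.
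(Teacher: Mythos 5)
The paper does not actually prove this proposition: it is imported verbatim from [TW, Propositions~3.1 and 3.2], so there is no in-text argument to compare yours against. On its own merits your reconstruction is correct: the difference tensor $A$ is right, the quadratic-in-$\gamma$ term of $\hat R^\gamma(X,E)E$ does vanish (both $A_XA_EE$ and $A_EA_XE$ equal $|E|^4X$ for unit $X\perp E$), the linear coefficient contracts to exactly $f(X)=|E|^2\langle\nabla_XE,X\rangle+g(\nabla_EE,E)$, and the chain ``slope $\le 0$ $\Rightarrow$ trace argument with $\operatorname{div}E=0$ $\Rightarrow$ integrate $g(\nabla_EE,E)=\tfrac12\operatorname{div}(|E|^2E)$ over the closed manifold $\Rightarrow$ (W2), hence $f\equiv 0$ and (W1)'' is sound. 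Your parenthetical about well-definedness of the Weyl sectional curvature also checks out, since the symmetric part of $\langle\hat R(u,v)\cdot,\cdot\rangle$ is a multiple of $g$ and dies on the off-diagonal contraction. The only loose end is the one you already flag: the averaging step degenerates for $n=2$ (it yields $0\le 0$), so (W2) in the surface case is not covered by this argument; since the paper only applies the proposition to Lie groups of dimension at least $3$, this is harmless here, but it should be stated as a hypothesis if you want the proof to stand alone.
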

In \cite{TW} the following problem is considered. Let $G$ be a unimodular Lie group with an invariant Riemannian metric $g$. Find invariant SNP connections on $G$, that is, find invariant vector fields $E\in\mathfrak{g}$ such that $(g,E)$ determines an invariant SNP Weyl connection. The following  is a summary of results obtained in this direction.
\begin{theorem}[\cite{TW}, Theorem 4.3]\label{thm:parallel} For a unimodular Lie group $G$, if a left-invariant vector field $E\in\mathfrak{g}$ satisfies (W1) and (W2), then $E$ is parallel.
\end{theorem}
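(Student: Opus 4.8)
The plan is to convert (W1) and (W2) into algebraic identities on $(\mathfrak g,\langle\,,\rangle,E)$ via Koszul's formula, and then play a Bochner-type identity off against the nonpositivity of the Jacobi operator $J_E=R(\cdot,E)E$. Assume $E\neq0$ and, after rescaling, $|E|=1$. Left-invariance makes $|E|$ constant, hence $\langle\nabla_XE,E\rangle=0$ for every $X$; unimodularity makes $\operatorname{div}E=0$, i.e. $d^*E^\flat=0$. We also use the two places where unimodularity enters essentially: that $\operatorname{div}E=0$, and that on a unimodular Lie group the Riemannian codifferential restricted to left-invariant forms coincides with the Chevalley--Eilenberg codifferential, so that $\langle d^*dE^\flat,E^\flat\rangle=|dE^\flat|^2$ holds pointwise.

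First I would split the tensor $X\mapsto\nabla_XE$ into symmetric and skew parts, $\langle\nabla_XE,Z\rangle=\tfrac12(\mathcal L_Eg)(X,Z)+\tfrac12\,dE^\flat(X,Z)$, which gives $|\nabla E|^2=\tfrac14|\mathcal L_Eg|^2+\tfrac12|dE^\flat|^2$ (with the $2$-form norm on $dE^\flat$). Condition (W2) says exactly that the quadratic form $Y\mapsto(\mathcal L_Eg)(Y,Y)=2\langle\nabla_YE,Y\rangle$ vanishes on $E^\perp$; polarizing, $(\mathcal L_Eg)|_{E^\perp\times E^\perp}=0$, and since moreover $(\mathcal L_Eg)(E,E)=0$ and $(\mathcal L_Eg)(E,Y)=\langle\nabla_EE,Y\rangle$, we get $|\mathcal L_Eg|^2=2|\nabla_EE|^2$. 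Independently, $dE^\flat$ decomposes orthogonally as $E^\flat\wedge(\nabla_EE)^\flat$ plus its restriction $\eta:=dE^\flat|_{\Lambda^2E^\perp}$, so $|dE^\flat|^2=|\nabla_EE|^2+|\eta|^2$. Feeding all of this into the Bochner--Weitzenb\"ock formula for the left-invariant $1$-form $E^\flat$ (whose norm is constant, and with $d^*E^\flat=0$) reduces it to $\operatorname{Ric}(E,E)=\langle d^*dE^\flat,E^\flat\rangle-|\nabla E|^2=|dE^\flat|^2-|\nabla E|^2=\tfrac12|dE^\flat|^2-\tfrac14|\mathcal L_Eg|^2$, and substituting the two displayed expressions makes everything collapse to the pointwise identity $\operatorname{Ric}(E,E)=\tfrac12|\eta|^2$.

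Now (W1) forces $\operatorname{Ric}(E,E)=\sum_jK(\operatorname{span}\{E,e_j\})\le0$, so $\operatorname{Ric}(E,E)=0$ and $\eta=0$. Vanishing of $\eta$ means $dE^\flat(Y,Y')=0$ for $Y,Y'\perp E$, i.e. $\langle\nabla_YE,Y'\rangle$ is \emph{symmetric} on $E^\perp$; combined with the \emph{skew}-symmetry supplied by (W2) this forces $\nabla_YE=0$ for all $Y\perp E$, whence $\nabla_XE=\langle X,E\rangle\,\nabla_EE$ for every $X$. It remains only to kill $\nabla_EE$. For this I would use that $J_E=R(\cdot,E)E$ is negative semidefinite by (W1), self-adjoint, and has trace $\operatorname{Ric}(E,E)=0$, hence $J_E=0$, i.e. $R(X,E)E=0$ for all $X$. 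Expanding $R(X,E)E=\nabla_X\nabla_EE-\nabla_E\nabla_XE-\nabla_{[X,E]}E$ for $X\perp E$ and using $\nabla_XE=\langle X,E\rangle\nabla_EE$ collapses it to $\nabla_X(\nabla_EE)=\langle[X,E],E\rangle\,\nabla_EE$; pairing with $\nabla_EE$ and using $X(|\nabla_EE|^2)=0$ yields $\langle[X,E],E\rangle\,|\nabla_EE|^2=0$ for all $X\perp E$, hence (trivially also for $X=E$) for all $X$. If $\nabla_EE\neq0$ this forces $\langle X,\operatorname{ad}_E^*E\rangle=0$ for every $X$, i.e. $\operatorname{ad}_E^*E=0$; but Koszul's formula gives $\nabla_EE=-\operatorname{ad}_E^*E$, a contradiction. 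Therefore $\nabla_EE=0$, so $\nabla_XE\equiv0$ and $E$ is parallel.

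I expect the main obstacle to be twofold. The conceptual one is isolating the right Bochner identity and normalizing it correctly — in particular recognizing that $\langle d^*dE^\flat,E^\flat\rangle=|dE^\flat|^2$ relies on unimodularity (this, together with $\operatorname{div}E=0$, is where the hypothesis on $G$ is spent), and then spotting the cancellation producing $\operatorname{Ric}(E,E)=\tfrac12|\eta|^2$. The technical one is the final curvature step: extracting the clean relation $\nabla_X(\nabla_EE)=\langle[X,E],E\rangle\nabla_EE$ from $J_E=0$ and closing the loop through $\nabla_EE=-\operatorname{ad}_E^*E$. Everything else is routine multilinear algebra with Koszul's formula and the splittings $\mathfrak g=\mathbb RE\oplus E^\perp$ and $\Lambda^2\mathfrak g^*=(E^\flat\wedge E^{\perp*})\oplus\Lambda^2E^{\perp*}$.
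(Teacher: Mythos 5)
Your argument is correct, but note that the paper does not actually prove this statement: it is imported verbatim from \cite{TW} (Theorem 4.3) and used as a black box, so any comparison is really with the source rather than with this paper. Your route --- a \emph{pointwise} Bochner--Weitzenb\"ock identity for the left-invariant $1$-form $E^\flat$ --- is a clean and self-contained alternative to the $\operatorname{ad}_E$-based trace computations one expects in \cite{TW}. The decisive observation is that on a unimodular group the Green identity $d(\alpha\wedge *\beta)=\bigl(\langle d\alpha,\beta\rangle-\langle\alpha,d^*\beta\rangle\bigr)\mathrm{vol}$ has left-invariant $(n-1)$-form on the left, and $d$ annihilates left-invariant $(n-1)$-forms precisely when $\operatorname{tr}\operatorname{ad}_X\equiv 0$; this is worth writing out explicitly, since it is the only place (besides $\operatorname{div}E=0$) where unimodularity enters, and the resulting identity $\operatorname{Ric}(E,E)=\tfrac12|\eta|^2$ genuinely fails in the non-unimodular case (e.g.\ for the horospherical direction in the solvable model of $\mathbb{H}^n$, where (W1) and (W2) hold but $E$ is not parallel). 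All the individual steps check out: the orthogonal splitting $|\nabla E|^2=\tfrac14|\mathcal{L}_Eg|^2+\tfrac12|dE^\flat|^2$, the computation $|\mathcal{L}_Eg|^2=2|\nabla_EE|^2$ from (W2) and $|E|=\mathrm{const}$, the decomposition $|dE^\flat|^2=|\nabla_EE|^2+|\eta|^2$, the conclusion $\eta=0$ and hence $\nabla_YE=0$ on $E^\perp$ (symmetric plus skew), and the final elimination of $\nabla_EE$ via $J_E=0$ (negative semidefinite, self-adjoint, traceless) together with $\nabla_EE=-\operatorname{ad}_E^*E$. What your approach buys is a transparent localization of the hypotheses --- (W1) is spent only through $\operatorname{Ric}(E,E)\le 0$ and the semidefiniteness of $J_E$, unimodularity only through the two facts above --- at the cost of importing the Weitzenb\"ock machinery; the Lie-algebraic route in \cite{TW} avoids differential forms but obscures exactly this bookkeeping.
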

Also, in \cite{TW} the authors looked for {\it isolated} examples  of Gaussian  thermostats $(G,g, E)$ with non-positive Weyl curvature.  The following cases were analyzed:
\begin{enumerate}
\item $\mathfrak{g}=\mathfrak{a}\oplus_{\varphi}\tilde{\mathfrak{a}}$, where $\mathfrak{a}$ and $\tilde{\mathfrak{a}}$ are abelian Lie algebras of dimesnions $n+1$ and $n$, respectively, and $\varphi:\mathfrak{a}\rightarrow \mathfrak{gl}(\tilde{\mathfrak{a}})$. The non-positivity of the Weyl connection determined by $E\in\mathfrak{a}$ is expressed in terms of the eigenvalues of $\varphi(E)$ (\cite{TW},Theorem 5.1).
\item  the case of a 3-dimensional Lie group, where  non-positive Weyl connections were found on the 3-dimensional solvable Lie group $\operatorname{Sol}^3$, which is semidirect product $\mathbb{R}\rtimes_{\varphi}\mathbb{R}^3$ determined by 
$$
\varphi(t)=
\begin{pmatrix}
e^t & 0\\
0 & e^{-t}
\end{pmatrix}
$$
\end{enumerate}
Note that 1) is a consequence  of   the results of \cite{TW}  which we summarize as follows.
\begin{theorem}\label{thm:tw-group} For a unimodular Lie group $G$ endowed with an invariant Riemannian metric $g$ a left-invariant vector field $E\in\mathfrak{g}$ is SNP if and only if $\operatorname{ad}\,E$ is skew-symmetric and $E\perp [\mathfrak{g},\mathfrak{g}]$.
\end{theorem}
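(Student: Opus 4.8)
The plan is to show that, for a nonzero left-invariant $E$ on the unimodular group $G$, \emph{both} sides of the claimed equivalence are equivalent to $E$ being parallel for the Levi-Civita connection $\nabla$ of $g$, and then to argue the ``only if'' and ``if'' directions separately through this pivot. First observe that on a unimodular $G$ every left-invariant field is divergence-free: Koszul's formula for left-invariant fields shows $\operatorname{div}E$ is, up to sign, $\operatorname{tr}\operatorname{ad}_E=0$. This lets the necessary conditions be applied in the ``only if'' direction.

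For the ``only if'' part I would argue: since $E$ is SNP and divergence-free, Proposition~\ref{prop:snp-gen} gives that $(g,E)$ satisfies (W1) and (W2), and then Theorem~\ref{thm:parallel} yields $\nabla E=0$. To pass to the Lie-algebra level, I would use Koszul's formula $2\langle\nabla_XE,Y\rangle=\langle[X,E],Y\rangle-\langle[E,Y],X\rangle+\langle[Y,X],E\rangle$ (valid since $g$ is left-invariant); the vanishing of the left-hand side for all $X,Y$ is precisely $\langle(\operatorname{ad}_E+\operatorname{ad}_E^{*})X,Y\rangle=\langle[Y,X],E\rangle$ for all $X,Y$, where $\operatorname{ad}_E^{*}$ is the $g$-adjoint of $\operatorname{ad}_E$. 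The left side of this identity is symmetric in $(X,Y)$ and the right side is antisymmetric, so both vanish identically: $\operatorname{ad}_E$ is skew-symmetric and $E\perp[\mathfrak g,\mathfrak g]$.

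For the ``if'' part, the same Koszul computation run backwards shows that skew-symmetry of $\operatorname{ad}_E$ together with $E\perp[\mathfrak g,\mathfrak g]$ forces $\nabla E=0$; in particular the Riemannian curvature $R$ annihilates $E$ in every slot (from $R(\cdot,\cdot)E=0$ and the symmetries of $R$), so $K_g(\Pi)=0$ for every plane $\Pi$ through $E$. Writing the Weyl connection of $\gamma E$ as $\hat\nabla^{\gamma}=\nabla+\gamma A$ with $A(X,Y)=-\varphi(Y)X-\varphi(X)Y+g(X,Y)E$, $\varphi=g(E,\cdot)$, a short computation gives $(\nabla_WA)(X,Y)=-g(\nabla_WE,Y)X-g(\nabla_WE,X)Y+g(X,Y)\nabla_WE$, which vanishes since $E$ is parallel; hence $\hat R^{\gamma}(X,Y)Z=R(X,Y)Z+\gamma^{2}\bigl(A(X,A(Y,Z))-A(Y,A(X,Z))\bigr)$. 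Expanding the quadratic term (using the two-plane identity $|\varphi(X)Y-\varphi(Y)X|^{2}=|\operatorname{proj}_{\Pi}E|^{2}\,|X\wedge Y|^{2}$, $\Pi=\operatorname{span}(X,Y)$) I expect to obtain
$$\langle\hat R^{\gamma}(X,Y)Y,X\rangle=\bigl(K_g(\Pi)-\gamma^{2}|\operatorname{proj}_{\Pi^{\perp}}E|^{2}\bigr)\,|X\wedge Y|^{2}.$$
Finally, since $R$ kills $E$, for an orthonormal basis $e_1,e_2$ of $\Pi$ with $e_i^{\perp}$ the component orthogonal to $E$ one has $K_g(\Pi)=\langle R(e_1^{\perp},e_2^{\perp})e_2^{\perp},e_1^{\perp}\rangle$ and $|e_1^{\perp}\wedge e_2^{\perp}|^{2}=|E|^{-2}|\operatorname{proj}_{\Pi^{\perp}}E|^{2}$, so $K_g(\Pi)=|E|^{-2}|\operatorname{proj}_{\Pi^{\perp}}E|^{2}\,K_g(\Pi')$ for the plane $\Pi'=\operatorname{span}(e_1^{\perp},e_2^{\perp})$. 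As left-invariant sectional curvature is bounded above by some $\kappa$ (a continuous function on the compact Grassmannian of $2$-planes in $\mathfrak g$), taking $\gamma_{0}=\sqrt{\max(0,\kappa)}/|E|$ makes the displayed quantity $\le 0$ for all $X,Y$ once $\gamma\ge\gamma_{0}$; thus the Weyl connections of $\gamma E$ are non-positive for $\gamma\ge\gamma_{0}$, i.e. $E$ is SNP.

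The ``only if'' direction is essentially formal once Proposition~\ref{prop:snp-gen} and Theorem~\ref{thm:parallel} are granted, so the core of the argument is the ``if'' direction, and the main obstacle there is uniformity in $\gamma$: a priori $K_g(\Pi)$ might not vanish fast enough relative to $|\operatorname{proj}_{\Pi^{\perp}}E|^{2}$ as $\Pi$ degenerates to a plane through $E$, which would prevent a single threshold $\gamma_{0}$ from working for all planes simultaneously. The factorization $K_g(\Pi)=|E|^{-2}|\operatorname{proj}_{\Pi^{\perp}}E|^{2}K_g(\Pi')$ — which holds exactly because $E$ is $\nabla$-parallel and therefore annihilated by $R$ — is what I expect to defuse this, pulling out the factor $|\operatorname{proj}_{\Pi^{\perp}}E|^{2}$ and reducing non-positivity to the trivial boundedness of the ordinary sectional curvature.
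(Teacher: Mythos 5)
Your proposal is correct and follows essentially the same route as the paper, whose proof consists of citing \cite{TW} Proposition 4.1, Theorem \ref{thm:parallel}, and the ``straightforward observation that a parallel vector field is SNP''. You simply supply the details the paper delegates elsewhere --- the Koszul-formula translation between $\nabla E=0$ and the pair ($\operatorname{ad}E$ skew-symmetric, $E\perp[\mathfrak g,\mathfrak g]$), and the uniform-in-$\gamma$ curvature estimate via the factorization $K_g(\Pi)=|E|^{-2}|\operatorname{proj}_{\Pi^{\perp}}E|^{2}K_g(\Pi')$ behind ``parallel $\Rightarrow$ SNP'' --- and these check out.
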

\begin{proof} The proof follows from \cite{TW}, Proposition 4.1, Theorem \ref{thm:parallel} and a straightforward observation that a parallel vector field is SNP.
\end{proof} 
The 3-dimensional case 2) was analyzed using Milnor's description of left-invariant metrics on Lie groups \cite{M}.
In this note we  pose the following problems.
\begin{enumerate}
\item  Find all invariant SNP Weyl connections on unimodular Lie groups of dimension  4.
\item Classify all unimodular solvable Lie groups admitting invariant Weyl connections with SNP property (thus, all solvable Lie groups which can yield SNP Gaussian thermostats $(G,g,\gamma E)$).
\end{enumerate}
Note that the unimodularity condition is quite natural, because it is a necesary condition for a Lie group $G$ to admit a lattice, and, in particular a co-compact lattice $\Gamma$ \cite{Ra}. If $\Gamma$ does exist, we get compact homogeneous spaces $G/\Gamma$ with homogeneous SNP Weyl connections.   
In the subsequent sections we solve  problems 1) and 2). Note that we don't address here  the dynamics of the Gaussian thermostats (which presumably is simple and reduces to the behavior of the integral curves of $E$). Our aim is to solve the geometric problem of classifying homogeneous spaces admitting SNP Weyl connections. Also, we do not address the more diffiucult problems of finding homogeneous  spaces with Weyl connections of non-positive curvature, which have more interesting dynamics (\cite{W2}, Section 10). The main results of this article are Theorem \ref{thm:small-dim} and Theorem \ref{thm:full-e}. These theorems show that the SNP condition on the Gaussian thermostat $(G,g,E)$ imposes structural restrictions on $G$ (Theorem \ref{thm:full-e}). Also, we find all 
compact 4-dimensional solvmanifolds admitting an SNP Weyl connection (Corollary \ref{cor:solvmnfds}). We find classes of solvable Lie groups admitting and not admitting invariant SNP Weyl connections among solvable extensions of nilpotent Lie groups whose Lie algebras belong to some known classes of nilpotent Lie algebras: metabelian, of Vergne type $\{2n,1,1\}$, and characteristically nilpotent (Propositions \ref{prop:semidirect}, \ref{prop:(n,2)}, \ref{prop:(2n,1,1)}, \ref{prop:charact}).  

Finally we want to mention that the general problem of classifying homogeneous spaces with invariant Weyl connections of non-positive curvature seems to be geometrically and algebraically very attractive and challenging. On one hand, one encounters   the rigidity phenomena similar to the case of invariant Riemannian metrics on homogeneous spaces. Any simply connected homogeneous space endowed with an invariant Riemannian metric of non-positive curvature admits a simply transitive action of a solvable Lie group $S$ whose Lie algebra satisfies strong structural restrictions (such algebras are called NC-algebras), see  \cite{AW}. However, imposing the restriction of unimodularity, one necessarily obtains flatness of the metric. On the other hand, the non-positivity of invariant Weyl connections seems to be less rigid and the results of this paper yield some small evidence for this. Note, however, that we impose the solvablity restriction on $G$ from the very beginning. 

Our terminology and notation basically follows \cite{TW}. We use \cite{GK} as a general reference for the theory of solvable and nilpotent Lie algebras. In the proof of Theorem \ref{thm:small-dim} we use some facts of the theory of lattices in Lie groups which can be found in \cite{Ra}. The material related to invariant connections and Riemannian metrics on homogeneous spaces is contained in \cite{KN1}, \cite{KN2}.   The Lie algebras of Lie groups $G$, $H$,..., are denoted by $\mathfrak{g},\mathfrak{h}$,... . The universal cover of a Lie group $G$ is denoted by $\tilde G$. 
\vskip6pt
\noindent {\bf Acknowledgement.} We thank Maciej P. Wojtkowski for valuable discussions. The first named author acknowledges the support of the National Science Center, Poland (grant NCN no. 2018/31/D/ST1/00083). The third named author was supported by the National Science Center, Poland (grant  2018/31/B/ST1/00053).

\section{Weyl connections on solvable Lie groups of dimension 4}

Our purpose is to fully classify all 4-dimensional unimodular solvable Lie group admitting invariant SNP Weyl connections and also all possible pairs $(g,E)$ on $G$, that is, all possible invariant Weyl SNP connections on them. To do this, we use a classification of invariant Riemannian metrics of such groups from \cite{T}.
Let us begin with a short description of this classification which we need to formulate our results. Let $G$ be a simply connected Lie group. A left invariant Riemannian metric on $G$ determines a scalar product on $\mathfrak{g}$, and conversely, a scalar product $\langle -,-\rangle$ on $\mathfrak{g}$ determines a left-invariant Riemannian metric on $G$. Let $\tilde{\mathfrak{M}}$ be the set of scalar products on $G$. The group $\operatorname{Aut}(\mathfrak{g})$ acts on $\tilde{\mathfrak{M}}$ by the formula $A\cdot\langle v,w\rangle=\langle A^{-1}v,A^{-1}w\rangle$ for any $A\in\operatorname{Aut}(\mathfrak{g})$. It is straightforward to see that to classify the left invariant Riemannian metrics on $G$ is equivalent  to classify the scalar products on $\mathfrak{g}$, which is the same as to find representatives of the orbits of $\operatorname{Aut}(\mathfrak{g})$ in $\tilde{\mathfrak{M}}$.  This is the result of \cite{T}. Note that in general  this classifcation is finer than just up to isometry, because an isometry of $(G,g)$ need not be a group isomorphism. However, this does not influence our arguments. In this article, we present the list of all  simply connected 4-dimensional solvable Lie groups which admit an SNP pair $(g,E)$ and show a representative in the appropriate $\operatorname{Aut}(\mathfrak{g})$-orbit.  
 Our notation follows that of \cite{T}. Here is the list of all 4-dimensional unimodular solvable Lie groups.
\begin{enumerate}
\item Nilpotent Lie groups:
$$\mathbb{R}^4, \operatorname{Nil}\times\mathbb{R},\operatorname{Nil}^4.$$
\item Solvable Lie groups:
$$\operatorname{Sol}^4_{m,n},\operatorname{Sol}^3\times\mathbb{R},\operatorname{Sol}_0^4,\operatorname{Sol}'^4_0,
\operatorname{Sol}_{\mu}^4,\widetilde{\operatorname{Isom}_0(\mathbb{R}^2)}\times\mathbb{R},\operatorname{Sol}_l^4,
\widetilde{S^1\rtimes\operatorname{Nil}^3}.$$
\end{enumerate}
Groups $\operatorname{Nil}\times \mathbb{R}, \operatorname{Nil}^4, \operatorname{Sol}^4_{m,n}, \operatorname{Sol}_3\times\mathbb{R},\operatorname{Sol}_0^4, \operatorname{Sol}'^4_0,\operatorname{Sol}_{\mu}^4$ and $\widetilde{\operatorname{Isom}_0(\mathbb{R}^2)}\times\mathbb{R}$ are all of the form 
$$\mathbb{R}\rtimes_{\varphi}\mathbb{R}^3,$$ 
where $\varphi:\mathbb{R}\rightarrow \operatorname{GL}(3,\mathbb{R})$  is defined as follows:
\begin{itemize}
\item $\operatorname{Nil}\times\mathbb{R}$: 
$$\varphi(t)=
\begin{pmatrix} 1 & t & 0\\
0 & 1 & 0\\
0 & 0 & 1
\end{pmatrix}
$$
\item $\operatorname{Nil}^4:$
$$\varphi(t)=
\begin{pmatrix} 1 & t & \frac{1}{2} t^2\\
0 & 1 & t\\
0 & 0 & 1
\end{pmatrix}
$$

\item $\operatorname{Sol}_{m,n}^4$:
$$\varphi(t)=
\begin{pmatrix} e^{\lambda t} & 0 & 0\\
0 &  e^t & 0\\
0 & 0 & e^{-(1+\lambda)t}
\end{pmatrix}
$$
 
\item $\operatorname{Sol}^3\times\mathbb{R}$:
$$\varphi(t)=
\begin{pmatrix}
1 & 0 & 0\\
0 & e^t & 0\\
0 & 0 & e^{-t}
\end{pmatrix}
$$

\item $\operatorname{Sol}_0^4$:
$$
\varphi(t)=
\begin{pmatrix}
e^t & 0 & 0\\
0 & e^t & 0\\
0 & 0 & e^{-2t}
\end{pmatrix}
$$
\item $\operatorname{Sol}_0'^4$:
$$\varphi(t)=
\begin{pmatrix}
e^t & te^t & 0\\
0 & e^t & 0\\
0 & 0 & e^{-2t}
\end{pmatrix}
$$
\item $\operatorname{Sol}_{\mu}^4$:
$$\varphi(t)=
\begin{pmatrix}
e^{\mu t}\cos t & e^{\mu t}\sin t t & 0\\
-e^{\mu t}\sin t & e^{\mu t}\cos t & 0\\
0 & 0 & e^{-2\mu t}
\end{pmatrix}
$$
\item $\widetilde{\operatorname{Isom}_0(\mathbb{R}^2)}\times\mathbb{R}$:
$$\varphi(t)=
\begin{pmatrix} \cos t &\sin t & 0\\
-\sin t & \cos t & 0\\
0 & 0 & 1
\end{pmatrix}
$$

\end{itemize}
Groups $\operatorname{Sol}_l^4$ and $\widetilde{S^1\rtimes_{\varphi}\operatorname{Nil}}$ are described as semidirect products with the 3-dimensional group of unipotent triangular matrices. The action of $\varphi(t)$ is given as follows:
\begin{itemize}
\item $\operatorname{Sol}_1^4$: $\varphi(t)$ acts on the unipotent  matrix
$$\begin{pmatrix}
1 & x & z\\
0 & 1 & y\\
0 & 0 & 1
\end{pmatrix}
$$
by the formula
$$\begin{pmatrix}
1 & e^tx & z\\
0 & 1 & e^{-t}y\\
0 & 0 & 1
\end{pmatrix}
$$
\item $\widetilde{S^1\rtimes_{\varphi}\operatorname{Nil}}$: $\varphi(t)$-action is given by the formula
$$\begin{pmatrix}
1 & x\cos t +y\sin t & z+p(x,y,t)\\
0 & 1 & -x\sin t+y\cos t\\
0 & 0 & 1
\end{pmatrix}
$$
where $p(x,y,t)=\frac{1}{2}(y^2-x^2)\cos t\sin t-xy\sin^2 t$.
 \end{itemize}
Note that the Lie algebras corresponding to the Lie groups from the above table are well known and can be easily calculated. The standard Lie bracket structure on $\mathbb{R}\oplus_{\varphi}\mathbb{R}^3$ and on $\mathbb{R}\oplus_{\varphi}\mathfrak{n}_3$ is not convenient to analyze the SNP condition which is a condition on the sectional curvature. However, \cite{T} contains a classification of all invariant Riemannian metrics on the given Lie groups $G$  in the following sense: the scalar product $\langle -,-\rangle$ on $\mathfrak{g}$ is determined by expressing the orthonormal base $X_1,...,X_4$ ("Milnor base") through the standard base $e_1,...,e_4$. We do not reproduce the standard bases here, however, each of them is explicitly written in the course of the proof of Theorem \ref{thm:small-dim}.  Here we assume that $e_1,e_2,e_3$ span either $\mathbb{R}^3$ or $\mathfrak{n}_3$, where $\mathfrak{n}_3$ is the Lie algebra of the triangular $(3\times 3)$-matrices with zeros on the diagonal (the "3-dimensional Heisenberg algebra"). Note again, that in the formulation of Theorem \ref{thm:small-dim} we describe the representatives of the $\operatorname{Aut}(\mathfrak{g})$-orbits by writing down the family of Milnor bases. 

\begin{theorem}\label{thm:small-dim} A non-abelian solvable unimodular Lie group $G$ admits an SNP Weyl connection if an only if it is one of the following:
   $$\operatorname{Nil}\times\mathbb{R}, \widetilde{\operatorname{Isom}_0(\mathbb{R}^2)}\times\mathbb{R},\widetilde{\operatorname{Nil}\rtimes S^1}, \operatorname{Sol}^3\times\mathbb{R}.$$ 
Any of these groups admits a co-compact lattice $\Gamma$, so any of solvmanifolds $G/\Gamma$ is a compact 4-dimensional manifold with an SNP connection. The following list yields all possible families of pairs $(g,E)$ determining invariant SNP Weyl connections:
\begin{itemize}
\item $\operatorname{Nil}\times\mathbb{R}$, two-parameter family
$$\{be_1,e_2,e_3,e_4,b>0\},  E=\alpha e_2,$$
\item $\widetilde{\operatorname{Isom}_0(\mathbb{R}^2)}\times\mathbb{R}$, three-parameter family
$$\{e_1,be_2.e_3,e_3,ce_4, 0<b<1, c>0\}, E=\alpha e_3,$$
\item $\widetilde{\operatorname{Nil}\rtimes S^1}$ four-parameter family
$$\{ae_1,e_2,be_1+ce_3, de_4,a,b,d>0,0<c<1\}, E=\alpha e_4,$$
\item $\operatorname{Sol}^3\times\mathbb{R}$, three-parameter family
$$\{e_1,e_2,be_3+e_3,ce_4,b,c>0\},E=\alpha e_1.$$

\end{itemize}
\end{theorem}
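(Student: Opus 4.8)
The plan is to reduce the whole classification to Theorem~\ref{thm:tw-group}. Since a solvable Lie group is unimodular exactly under the hypotheses of Theorem~\ref{thm:small-dim}, Theorem~\ref{thm:tw-group} says that for an invariant metric $g$, corresponding to a scalar product $\langle-,-\rangle$ on $\mathfrak g$, the pair $(g,E)$ determines an invariant SNP Weyl connection if and only if $E\neq 0$, $\operatorname{ad}E$ is skew-symmetric with respect to $\langle-,-\rangle$, and $E\perp[\mathfrak g,\mathfrak g]$. These two conditions are preserved by $\operatorname{Aut}(\mathfrak g)$ acting simultaneously on scalar products and on $E$, so the set of admissible pairs is $\operatorname{Aut}(\mathfrak g)$-invariant; hence it suffices to examine, for each $\operatorname{Aut}(\mathfrak g)$-orbit representative of an invariant metric from the classification of \cite{T} --- written through the Milnor bases recalled above --- whether an admissible $E$ exists, and if so to list all of them. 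Once the list of groups is settled, the statement on compact solvmanifolds follows from the classical fact (\cite{Ra} and the classification of $4$-dimensional solvmanifolds) that each of the four groups admits a co-compact lattice $\Gamma$: the left-invariant metric and $E$ descend to $G/\Gamma$, and, $G$ being unimodular, $E$ is divergence-free and hence is the Gauduchon representative of the induced Weyl connection, which is therefore SNP.

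To cut down the list of $4$-dimensional unimodular solvable Lie algebras I would first record two elementary facts. A skew-symmetric operator is semisimple with purely imaginary spectrum; thus if $\mathfrak g$ is nilpotent, then $\operatorname{ad}E$ is simultaneously nilpotent, so $\operatorname{ad}E=0$ and $E\in Z(\mathfrak g)$. Together with $E\perp[\mathfrak g,\mathfrak g]$ this shows that a nilpotent $G$ carries an SNP connection iff $Z(\mathfrak g)\not\subseteq[\mathfrak g,\mathfrak g]$: true for $\operatorname{Nil}\times\mathbb R$ (two-dimensional centre, one-dimensional commutator), false for $\operatorname{Nil}^4$ (one-dimensional centre inside the two-dimensional commutator), and $\mathbb R^4$ is excluded as abelian. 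For a solvable group $\mathbb R\rtimes_\varphi\mathbb R^3$, set $D=\varphi'(0)$ and $E=ae_0+v$ with $e_0$ spanning the $\mathbb R$-factor and $v\in\mathbb R^3$; then $\operatorname{ad}E$ maps $e_0\mapsto-Dv$ and restricts to $aD$ on the invariant subspace $\mathbb R^3$, so skew-symmetry of $\operatorname{ad}E$ forces $aD$ to be skew-symmetric on $\mathbb R^3$ (and, via the even-rank constraint, $Dv=0$ when $a=0$): either $a=0$ and $E\in\ker D\subseteq Z(\mathfrak g)$, or $a\neq0$ and $D$ is semisimple with purely imaginary spectrum. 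Inspecting these matrices $D$ one by one eliminates $\operatorname{Sol}^4_{m,n}$, $\operatorname{Sol}^4_0$, $\operatorname{Sol}'^4_0$, $\operatorname{Sol}^4_\mu$ ($\mu\neq0$), for which $D$ has no purely imaginary eigenvalue and $\ker D=0$, and leaves $\widetilde{\operatorname{Isom}_0(\mathbb R^2)}\times\mathbb R$ ($\operatorname{spec}D=\{i,-i,0\}$) and $\operatorname{Sol}^3\times\mathbb R$ ($\ker D=\mathbb R e_1$ and $\operatorname{im}D=[\mathfrak g,\mathfrak g]$, so the central $E=\alpha e_1$ is orthogonal to $[\mathfrak g,\mathfrak g]$ for a suitable metric). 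For the two groups built as $\mathbb R\rtimes\mathfrak n_3$ one computes $[\mathfrak g,\mathfrak g]=\mathfrak n_3$, so $E\perp[\mathfrak g,\mathfrak g]$ means $E\perp\mathfrak n_3$; examining the induced derivation of $\mathfrak n_3$, $\operatorname{Sol}^4_l$ (where this derivation has real spectrum, forcing $E\in Z(\mathfrak g)\cap\mathfrak n_3\subseteq[\mathfrak g,\mathfrak g]$, impossible) is eliminated, while $\widetilde{\operatorname{Nil}\rtimes S^1}$ (where the induced derivation is of rotational type) survives.

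It then remains to treat the four surviving groups $\operatorname{Nil}\times\mathbb R$, $\widetilde{\operatorname{Isom}_0(\mathbb R^2)}\times\mathbb R$, $\widetilde{\operatorname{Nil}\rtimes S^1}$, $\operatorname{Sol}^3\times\mathbb R$. For each I would run through the $\operatorname{Aut}(\mathfrak g)$-orbit representatives of invariant metrics from \cite{T}, expressed via the Milnor bases, and for each representative solve the linear system $\{\operatorname{ad}E\ \text{skew-symmetric},\ E\perp[\mathfrak g,\mathfrak g]\}$ for $E$. In every case the solution space is one-dimensional --- giving the scalar $\alpha$ --- while the surviving moduli $b,c,d$ of the metric orbit, subject to the stated inequalities, are exactly the ones recorded in the statement; discarding the orbits for which only $E=0$ solves the system leaves precisely the four displayed families.

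The preliminary pruning via semisimplicity of $\operatorname{ad}E$ is short and uniform. I expect the main obstacle to be the final step: carrying the metric classification of \cite{T} through in full for the four surviving algebras, keeping careful track of which Milnor-basis parameters survive, and verifying both exhaustiveness (no further metric orbit admits a nonzero admissible $E$) and minimality (the inequalities on $b,c,d$ are exactly the orbit-representative conditions). The bookkeeping is most delicate for the Heisenberg-fibred group $\widetilde{\operatorname{Nil}\rtimes S^1}$ --- and, in the elimination step, for $\operatorname{Sol}^4_l$ --- where $\operatorname{ad}E$ and $[\mathfrak g,\mathfrak g]$ involve the nonabelian structure of $\mathfrak n_3$.
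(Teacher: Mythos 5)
Your proposal is correct in its overall architecture and shares its backbone with the paper's proof: both reduce the SNP condition to the algebraic criterion of Theorem~\ref{thm:tw-group} (equivalently, to the identities (\ref{w2}) and (\ref{nabla}) that the paper verifies), and both then consult van Thuong's classification of invariant metrics via Milnor bases to enumerate the admissible pairs $(g,E)$. Where you genuinely diverge is in the elimination step. The paper rules out $\mathbb{R}^4$, $\operatorname{Nil}^4$, $\operatorname{Sol}^4_{m,n}$, $\operatorname{Sol}^4_0$, $\operatorname{Sol}'^4_0$, $\operatorname{Sol}^4_\mu$ and $\operatorname{Sol}^4_l$ by writing out $\langle[E,Y],Y\rangle$ in every Milnor basis and forcing the coefficients of $E$ to vanish; you instead observe, metric-independently, that a skew-symmetrizable operator is semisimple with purely imaginary spectrum and even rank, so that on a nilpotent algebra $\operatorname{ad}E=0$ (reducing the question to $Z(\mathfrak g)\not\subseteq[\mathfrak g,\mathfrak g]$), and on $\mathbb{R}\rtimes_\varphi\mathbb{R}^3$ or $\mathbb{R}\rtimes_\varphi\mathfrak n_3$ the spectrum of $\varphi'(0)$ (or of its induced action on the graded pieces of $\mathfrak n_3$) decides the matter. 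This is cleaner and explains \emph{why} exactly those four groups survive, which the paper's computation does not. What your route does not buy you is the second half of the theorem: the explicit parameter families $\{b,c,d,\dots\}$ and the constraints under which a nonzero $E$ exists (e.g.\ for $\widetilde{\operatorname{Isom}_0(\mathbb{R}^2)}\times\mathbb{R}$ the metric parameters $b_{13},b_{23}$ must vanish, and similarly for $\widetilde{\operatorname{Nil}\rtimes S^1}$ only the second family of bases with $b_{12}=0$ contributes). That enumeration is still the case-by-case Milnor-basis linear algebra that the paper carries out in full and that you correctly flag as the main remaining labour; your proposal defers it but does not misidentify or skip any needed idea. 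Two small points to supply when writing it up: for the $\mathbb{R}\rtimes\mathfrak n_3$ groups, $\operatorname{ad}E|_{\mathfrak n_3}$ is $a\,\varphi'(0)$ only modulo the nilpotent inner part $\operatorname{ad}v$, so the spectrum argument should be run on the quotient $\mathfrak n_3/[\mathfrak n_3,\mathfrak n_3]$; and the lattice claim, which you cite abstractly, is proved in the paper by exhibiting an explicit cocompact lattice in each of the four groups.
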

\begin{proof} 
Any invariant Riemannian metric on $G$ is determined by the choice of the scalar product $\langle -,-\rangle$ on $\mathfrak{g}$.
By Theorem \ref{thm:parallel} if $(g,E)$ is invariant and SNP, then  $\nabla_YE=0$ for any invariant vector field $E$. Also, it satisfies (W2), which translates into the identity for the scalar product $\langle -,-\rangle$ on the Lie algebra $\mathfrak{g}$:  
\begin{equation}
\langle [E,Y],Y\rangle  = 0, \label{w2}
\end{equation}
for all $Y\in\mathfrak{g}$ orthogonal to $E$ (see the proof of Theorem 7.2 in \cite{TW}, or verify directly). 
The usual Riemannian geometry formula for $\nabla_YE$ yields
\begin{equation}
 \langle[Y,E],Z \rangle - \langle [Z,Y], E \rangle + \langle [Z,E] , Y \rangle =0 \label{nabla}
\end{equation}
for all $Y,Z\in\mathfrak{g}$.
We check these identities in each case separately, writing down the corresponding expressions for (\ref{w2}) and (\ref{nabla}) in terms of the Milnor bases. 
\vskip6pt
{\bf The case of nilpotent $\mathfrak{g}$}.
\vskip6pt


(I) $\mathfrak{g} = \operatorname{Lie}(\operatorname{Nil} \times \mathbb{R})$. From \cite{T}, Theorem 3.1, every metric is isometric to the one defined by the orthonormal base
$$X_1= b_{11} e_1, \ X_2=e_2, \ X_3=e_3, \ X_4 = e_4, \ b_{11} >0.$$
Calculating the Lie brackets we get 
$$[X_i, X_j] =0 \ \text{for} \ (i,j) \neq (3,4), (4,3)$$
$$[X_3, X_4] = \frac{1}{b_{11}} X_1, \ [X_4, X_3] = -\frac{1}{b_{11}} X_1.$$

Let $E=\sum_{i=1}^4 \alpha_i X_i, \ Y=\sum_{i=1}^4 \beta_i X_i$. Then

$$[E, Y]=\left[\alpha_{3} X_{3}, \beta_{4} X_{4}\right]+\left[\alpha_{4} X_{4}, \beta_{3} X_{3}\right] = \frac{\alpha_3\beta_4 - \alpha_4 \beta_3}{b_{11}} X_1$$

$$\langle [E,Y], Y \rangle = \left\langle \frac{\alpha_3\beta_4 - \alpha_4 \beta_3}{b_{11}} X_1 , \sum \beta_i X_i \right\rangle = \beta_1\frac{\alpha_3\beta_4 - \alpha_4 \beta_3}{b_{11}} X_1$$
Because $\langle [E,Y], Y \rangle=0$ for all $\beta_1, \beta_2, \beta_3, \beta_4$, we get $\alpha_3=\alpha_4=0.$ Therefore $E= \alpha_1 X_1 +\alpha_2X_2$ and  $ [E,Y]=0$ for all $Y\in\mathfrak{g}$. Then (\ref{w2}) implies $\langle [Z,Y], E \rangle=0$. By choosing the appropriate coefficients, we get $ [Z,Y] = \gamma X_1$ for all $\gamma\in\mathbb{R}$. Now we calculate
$$\langle [Z,Y], E \rangle = \langle \gamma X_1, \alpha_1 X_1 +\alpha_2X_2\rangle=\gamma\alpha_1=0.$$
Therefore, $a_1=0$ and $E= \alpha_2X_2$.

(II) $\mathfrak{g} = \operatorname{Lie}(\operatorname{Nil}^4)$. We write down the Lie brackets for the standard generators $e_1,...,e_4$ 
$$[e_1,e_4] = 0, \ [e_2,e_4]=e_1, \ [e_3,e_4]=e_2.$$
 By \cite{T}, Theorem 3.2, every metric is isometric to the one defined by an orthonormal base:
$$X_1= b_{11} e_1, \ X_2=b_{12}e_2+b_{22}e_2, \ X_3=e_3, \ X_4 = e_4, \ b_{11},b_{22} >0, \ b_{12} \geq 0.$$

We have 
$$[X_i, X_j] =0 \ \text{for} \ (i,j) \neq (2,4),(4,2), (3,4), (4,3)$$
$$[X_2, X_4] = \frac{b_{22}}{b_{11}}X_1 = \gamma_1 X_1 \ \text{for some} \ \gamma_1 = \frac{b_{22}}{b_{11}} >0$$
$$[X_3, X_4] = \frac{1}{b_{22}}X_2 -\frac{b_{12}}{b_{22}b_{11}} X_1 = \gamma_2X_2 -\gamma_3 X_1 \ \text{for some} \ \gamma_2>0, \gamma_3\geq 0.$$

Write down the expressions for $E$ and $Y$ in the form $E=\sum_{i=1}^4 \alpha_i X_i, \, Y=\sum_{i=1}^4 \beta_i X_i$. Then

$$[E,Y] = [(\alpha_2\beta_4-\alpha_4\beta_2)\gamma_1X_1+(\alpha_3\beta_4-\alpha_4\beta_3)(\gamma_2X_2 -\gamma_3 X_1)=$$
$$=\left((\alpha_2\beta_4-\alpha_4\beta_2)\gamma_1-(\alpha_3\beta_4-\alpha_4\beta_3)\gamma_3\right)X_1+(\alpha_3\beta_4-\alpha_4\beta_3)\gamma_2X_2$$

$$0 = \langle [E,Y] , Y \rangle = \left((\alpha_2\beta_4-\alpha_4\beta_2)\gamma_1-(\alpha_3\beta_4-\alpha_4\beta_3)\gamma_3\right) \beta_1 +(\alpha_3\beta_4-\alpha_4\beta_3) \beta_2 $$
for all $\beta_1,\beta_2, \beta_3, \beta_4 \in \mathbb{R}$. If $\beta_1=0, \beta_2=1, \beta_3=0, \beta_4=1$, we get $\alpha_3=0$. Analogously, it can be shown that $\alpha_4=0$ and $\alpha_2=0$. Thus, $E=\alpha_1 X_1$.  We obtain $ [E,Y]=0$ for all $Y\in\mathfrak{g}$. It follows  from (\ref{w2})  that $\langle [Z,Y], E \rangle=0$. By choosing the appropriate coefficients, we get $ [Z,Y] = \eta_1 X_1+\eta_2X_2$ for all $\eta_1,\eta_2\in\mathbb{R}$.This implies
$$\langle [Z,Y], E \rangle = \langle \eta_1 X_1+\eta_2X_2, \alpha_1 X_1 \rangle=\eta_1\alpha_1=0,$$
which yields $\alpha_1=0$ and $E= 0$.
\vskip6pt
\noindent {\bf  The case $\mathfrak{g}$  solvable with abelian nilradical}
\vskip6pt
 In this case $\mathfrak{g} =\mathbb{R}^3 \rtimes_{\varphi} \mathbb{R}$ and $\mathbb{R}e_4  \cong \mathbb{R}$. We also have 
$$\operatorname{ad} e_4 | _{\operatorname{Span}(e_1,e_2,e_3)} =\varphi(1) $$
 (from Table 4 in \cite{T}). By \cite{TW}, Theorem 4.1, $E$ is orthogonal to $[\mathfrak{g},\mathfrak{g}]$. We analyze the following subcases. 

(IIIA) $\mathfrak{g} = \operatorname{Lie}(\operatorname{Sol}_0^4)$. Write down the Lie brackets for the standard base:
$$[e_4,e_1] = e_1, \ [e_4,e_2] = e_2, \ [e_4,e_3] = -2e_3$$
By \cite{T}, Theorem 4.1, every metric is  defined by an orthonormal base:
$$X_1=  e_1, \ X_2=e_2, \ X_3=b_{13}e_1+e_3, \ X_4 = b_{44} e_4, \ b_{44} >0, \ b_{13}\geq 0.$$
We get
$$[X_4, X_1 ] = b_{44} X_1 $$
$$[X_4, X_2 ] = b_{44} X_2 $$
$$[X_4,X_3] =-2b_{44}X_3+3b_{44} b_{13} X_1$$
  Here $[\mathfrak{g},\mathfrak{g}]=\operatorname{Span}(X_1,X_2,X_3)$ , so $E=\alpha_4X_4$ for some $\alpha_4\in\mathbb{R}$. Let $Y=\beta_1X_1+\beta_2X_2+\beta_3X_3$. Then
$$[E,Y] =[\alpha_4X_4, \beta_1X_1+\beta_2X_2+\beta_3X_3 ]  = \alpha_4 b_{44} \left(
(\beta_1+3\beta_3b_{13})X_1 +\beta_2 X_2-2\beta_3X_3
\right)$$
$$0=\langle [E,Y] , Y\rangle = \alpha_4 b_{44} \left(
\beta_1^2+3\beta_1\beta_3b_{13} +\beta_2 ^2-2\beta_3^2\right)$$
But $b_{44}>0$. So $\alpha_{4}=0$ or $
\beta_1^2+3\beta_1\beta_3b_{13} +\beta_2 ^2-2\beta_3^2=0$ for all $\beta_1, \beta_2, \beta_3 \in \mathbb{R}$. Then $\alpha_{4}=0$, which implies $E=0$.

(IIIB) $\mathfrak{g} = \operatorname{Lie}(\operatorname{{Sol}'_0}^4)$. As in the previous cases
$$[e_4,e_1] = e_1, \ [e_4,e_2] = e_1+e_2, \ [e_4,e_3] = -2e_3.$$
\cite{T}, Theorem 4.2 implies that every metric is  defined by an orthonormal base:
$$X_1=  e_1, \ X_2=b_{22}e_2, \ X_3=b_{13}e_1+b_{23}e_2+e_3, \ X_4 = b_{44} e_4, \ b_{22},b_{44} >0, \ b_{13}\geq 0,\ b_{23}\in \mathbb{R}.$$
Calculating the brackets we get
$$[X_4, X_1 ] = b_{44} X_1 $$
$$[X_4, X_2 ] = b_{44} X_2 +b_{22}b_{44}X_1 = b_{44}(\gamma_1X_2+\gamma_2X_1) \ \text{for some} \ \gamma_1,\gamma_2>0$$
$$[X_4,X_3] = -2b_{44}X_3+3\frac{b_{23}b_{44}}{b_{22}}X_2+b_{44}(3b_{13}+b_{23})X_1 =  b_{44}(\gamma_3X_3+\gamma_4X_2 +\gamma_5X_1)$$
$$\ \text{for some} \ \gamma_3<0,\gamma_4,\gamma_5\in\mathbb{R}$$
 Here $[\mathfrak{g},\mathfrak{g}]=\operatorname{Span}(X_1,X_2,X_3)$ , so $E=\alpha_4X_4$ for some $\alpha_4\in\mathbb{R}$. Let $Y=\beta_1X_1+\beta_2X_2+\beta_3X_3$. Then
$$[E,Y] = \alpha_4\beta_1 b_{44} X_1 +\alpha_4\beta_2 b_{44} (\gamma_1X_2+\gamma_2X_1)+\alpha_4\beta_3 b_{44} (\gamma_3X_3+\gamma_4X_2 +\gamma_5X_1)$$
$$0=\langle [E,Y] , Y\rangle = \alpha_4\beta_1^2 b_{44} +\alpha_4\beta_2 b_{44}\gamma_2\beta_1 +\alpha_4\beta_3 b_{44}\gamma_5\beta_1+\alpha_4\beta_2 b_{44}\gamma_1\beta_2+\alpha_4\beta_3 b_{44}\gamma_4\beta_2+$$
$$+\alpha_4\beta_3^2 b_{44}\gamma_3 = \alpha_4 b_{44} (\beta_1^2  +\beta_2 \gamma_2\beta_1 +\beta_3 \gamma_5\beta_1+\beta_2^2 \gamma_1+\beta_3 \gamma_4\beta_2+\beta_3^2 \gamma_3)$$ for all $\beta_1, \beta_2, \beta_3 \in \mathbb{R}$. Then $\alpha_{4}=0$, and, therefore, $E=0$.

(IIIC) $\mathfrak{g} = \operatorname{Lie}(\operatorname{Sol}_{m,n}^4)$. As before
$$[e_4,e_1] = \lambda e_1, \ [e_4,e_2] = e_2, \ [e_4,e_3] = (-1-\lambda) e_3 \ \text{for some} \ \lambda >1.$$
By \cite{T}, Theorem 4.3  every metric is  defined by an orthonormal base
$$X_1=  e_1, \ X_2=b_{12}e_1+e_2, \ X_3=b_{13}e_1+b_{23}e_2+e_3, \ X_4 = b_{44} e_4, \ b_{44} >0, \ b_{12},b_{23}\geq 0,\ b_{13}\in \mathbb{R}.$$
We get the following expressions of the Lie brackets
$$[X_4, X_1 ] = b_{44} \lambda X_1 $$
$$[X_4, X_2 ] = b_{44} X_2 +(\lambda-1)b_{12}b_{44} X_1 = b_{44}(X_2+\gamma_1X_1) \ \text{for some} \ \gamma_1\geq0$$
$$[X_4,X_3] = b_{44}(\gamma_2X_3+\gamma_3X_2 +\gamma_4X_1) \ \text{for some} \ \gamma_2<0,\gamma_3,\gamma_4\in\mathbb{R}$$
Because $\lambda>1>0$, this case is analogous to (IVB). We obtain $E=0$.

(IIID) $\mathfrak{g} = \operatorname{Lie}(\operatorname{Sol}^3\times\mathbb{R})$. We have
$$[e_4,e_1] = 0, \ [e_4,e_2] = e_2, \ [e_4,e_3] = -e_3.$$
\cite{T}, Theorem 4.4 shows that every metric is  defined by an orthonormal basis
$$X_1=  e_1, \ X_2=b_{12}e_1+e_2, \ X_3=b_{13}e_1+b_{23}e_2+e_3, \ X_4 = b_{44} e_4, \ b_{44} >0, \ b_{12},b_{23}\geq 0,\ b_{13}\in \mathbb{R}.$$
We obtain the equalities
$$[X_4, X_1 ] = 0 $$
$$[X_4, X_2 ] = b_{44} X_2 -b_{12}b_{44} X_1 = b_{44}(X_2+\gamma_1X_1) \ \text{for} \ \gamma_1 = -b_{12}\leq0$$
$$[X_4,X_3] = b_{44}(\gamma_2X_3+\gamma_3X_2 +\gamma_4X_1) $$
$$\ \text{for } \ \gamma_2=-1<0,\gamma_3=2b_{23} \geq0,\gamma_4=b_{13}-2b_{23}b_{12}\in\mathbb{R}$$
Let us consider two subcases. The first one arises  if  $[\mathfrak{g},\mathfrak{g}]=\operatorname{Span}(X_2,X_3)$. Then $b_{12}=b_{13}=0$. Put $E=\alpha_1X_1+\alpha_4X_4$ for some $\alpha_1, \alpha_4\in\mathbb{R}$ and $Y=\beta_1X_1+\beta_2X_2+\beta_3X_3+\beta_4X_4$. Here
$$[E,Y] =\alpha_4\beta_2 b_{44} X_2+\alpha_4\beta_3 b_{44} (\gamma_2X_3+\gamma_3X_2 ), $$
$$0=\langle [E,Y],Y \rangle = \alpha_4\beta_2 b_{44} \beta_2+\alpha_4\beta_3 b_{44} (\gamma_2\beta_3+\gamma_3\beta_2 )= b_{44} \alpha_4(\beta_2^2+\beta_3^2\gamma_2+\beta_3 \gamma_3\beta_2)$$
for all $\beta_1, \beta_2, \beta_3, \beta_4 \in \mathbb{R}$. It follows that $\alpha_{4}=0$. Then $E=\alpha_1X_1$. We obtain $ [E,Y]=0$ for all $Y\in\mathfrak{g}$ and from (\ref{nabla})  $\langle [Z,Y], E \rangle=0$. By choosing the appropriate coefficients, we get $ [Z,Y] = \eta_2X_2+\eta_3X_3$ for all $\eta_2,\eta_3\in\mathbb{R}$.Then we obtain
$$\langle [Z,Y], E \rangle = \langle \eta_2X_2+\eta_3X_3, \alpha_1 X_1 \rangle==0.$$
Finally $E=\alpha_1X_1$.

The second subcase arises when  $[\mathfrak{g},\mathfrak{g}]=\operatorname{Span}(X_1, X_2,X_3)$.
Put $Y=\beta_1X_1+\beta_2X_2+\beta_3X_3$. So $E=\alpha_4X_4$ for some $\alpha_4\in\mathbb{R}$. Then
$$[E,Y] = \alpha_4\beta_2 b_{44} (X_2+\gamma_1X_1)+\alpha_4\beta_3 b_{44} (\gamma_2X_3+\gamma_3X_2 +\gamma_4X_1)$$
$$0=\langle [E,Y] , Y\rangle = \alpha_4\beta_2 b_{44}\gamma_1\beta_1 +\alpha_4\beta_3 b_{44}\gamma_4\beta_1+\alpha_4\beta_2 b_{44}\beta_2+\alpha_4\beta_3 b_{44}\gamma_3\beta_2+$$
$$+\alpha_4\beta_3^2 b_{44}\gamma_2 = \alpha_4 b_{44} (\beta_2 \gamma_1\beta_1 +\beta_3 \gamma_4\beta_1+\beta_2 ^2+\beta_3 \gamma_3\beta_2+\beta_3^2 \gamma_2)$$ for all $\beta_1, \beta_2, \beta_3 \in \mathbb{R}$. Then $\alpha_{4}=0$. Finally $E=0$.

(IIIE) $\mathfrak{g} = \operatorname{Lie}(\operatorname{Sol}_{\mu}^4)$. We get $E=0$ in a way similar to  (IVC).

(IIIF) $\mathfrak{g} =\operatorname{Lie}( \widetilde{\operatorname{Isom}(\mathbb{R}^2) \times\mathbb{R}})$. Here $[e_i,e_j]=0$ for $i,j=1,2,3,$ and
$$[e_4,e_1]=-e_2, [e_4,e_2]=e_1, [e_4,e_3]=0.$$
By \cite{T}, Theorem 4.6 there are two forms of the corresponding orthonormal bases. 
We will write down the first one, the second is analogous.
$$X_1=  e_1, \ X_2=b_{22}e_2, \ X_3=b_{13}e_1+b_{23}e_2+e_3, \ X_4 = b_{44} e_4, \ b_{44} >0, \ b_{13},b_{23}\geq 0, 0<b_{22}<1.$$ We have:
$$[X_4,X_1] = -\frac{b_{44}}{b_{22}}X_2 = b_{44}\gamma_1 X_2  \ \text{for} \ \gamma_1 = -\frac{1}{b_{22}}\leq0 $$
$$[X_4,X_2] = b_{44}b_{22} X_1 = b_{44}\gamma_2 X_1\ \text{for} \ \gamma_2= b_{22} >0 $$
$$[X_4,X_3] = b_{44}b_{23}X_1 -\frac{b_{44}b_{13}}{b_{22}} X_2 =  b_{44} (\gamma_3 X_1+\gamma_4X_2)\ \text{for} \ \gamma_3= b_{23} \geq 0 , \gamma_4= -\frac{b_{13}}{b_{22}} \leq 0 $$
Here $[\mathfrak{g},\mathfrak{g}]=\operatorname{Span}(X_1,X_2)$ , so $E=\alpha_3X_3+\alpha_4X_4$ for some $\alpha_3,\alpha_4\in\mathbb{R}$. Let $Y=\beta_1X_1+\beta_2X_2+\beta_3X_3+\beta_4X_4$. Then:
$$[E,Y] = \alpha_4\beta_1 b_{44}\gamma_1 X_2 +\alpha_4\beta_2  b_{44}\gamma_2 X_1+(\alpha_4\beta_3-\alpha_3\beta_4)  b_{44} (\gamma_3 X_1+\gamma_4X_2)$$
$$0=\langle [E,Y] , Y\rangle =\alpha_4\beta_1 b_{44}\gamma_1 \beta_2 +\alpha_4\beta_2  b_{44}\gamma_2 \beta_1+(\alpha_4\beta_3-\alpha_3\beta_4)  b_{44} (\gamma_3 \beta_1+\gamma_4\beta_2)$$
for all $\beta_1,\beta_2, \beta_3, \beta_4 \in \mathbb{R}$. If $\beta_1=1, \beta_2=1, \beta_3=0, \beta_4=0$, we have $$0=\alpha_4 b_{44}\gamma_1  +\alpha_4  b_{44}\gamma_2 = \alpha_4b_{44} (\gamma_1+\gamma_2) = \alpha_4b_{44} \left(-\frac{1}{b_{22}} +b_{22} \right)=\alpha_4b_{44} \frac{b_{22}^2-1}{b_{22}}.$$ So $\alpha_4=0$. Then
$$0=\langle [E,Y] , Y\rangle = -\alpha_3\beta_4b_{44} (\gamma_3 \beta_1+\gamma_4\beta_2)$$
So $\alpha_3=0$ or $\gamma_3 \beta_1+\gamma_4\beta_2=0$. If $\alpha_3=0,$
 then $E=0$.
However, if $\gamma_3 \beta_1+\gamma_4\beta_2=0$, take $\beta_1=\beta_2=1$ and get $b_{13}=b_{23}=0$. So $E=\alpha_3 X_3$.  We obtain $ [E,Y]=0$ for all $Y\in\mathfrak{g}$, and from (\ref{w2})  $\langle [Z,Y], E \rangle=0$. By choosing the appropriate coefficients, we get $ [Z,Y] = \eta_1 X_1+\eta_2X_2$ for all $\eta_1,\eta_2\in\mathbb{R}$.Then we have
$$\langle [Z,Y], E \rangle = \langle \eta_1 X_1+\eta_2X_2, \alpha_3 X_3 \rangle=0.$$
Finally $E=\alpha_3 X_3 \neq 0$ if $b_{13}=b_{23}=0$.
\vskip6pt
{\bf The case of  solvable  $\mathfrak{g}$ with a non-abelian nilradical.}
\vskip6pt

(IVA) $\mathfrak{g} = \operatorname{Lie}(\operatorname{Sol}_1^4)$. The standard Lie brackets are written as follows.
$$[e_1, e_i] = 0 \ \text{for all} \ i$$
$$[e_2,e_3] =e_1, \ [e_4.e_2]=e_2, \ [e_4, e_3] =-e_3$$
\cite{T},Theorem 5.1 implies that every metric is  defined by an orthonormal basis:
$$X_1=  b_{11}e_1, \ X_2=b_{12}e_1+e_2, \ X_3=b_{13}e_1+b_{23}e_2+e_3, \ X_4 = b_{44} e_4, \ b_{11},b_{44} >0, \ b_{12},b_{23}\geq 0,\ b_{13}\in \mathbb{R}.$$
We have
$$[X_1, X_i] = 0 \ \text{for all} \ i$$
$$[X_2, X_3 ] =\gamma_1 X_1 \ \text{for some} \ \gamma_1>0$$
$$[X_2, X_4 ] =\gamma_2 X_2+\gamma_3 X_1 \ \text{for some} \ \gamma_2<0, \gamma_3\in\mathbb{R}$$
$$[X_3, X_4 ] =\gamma_4 X_3+\gamma_5 X_2 +\gamma_6 X_1\ \text{for some} \ \gamma_4>0, \gamma_5,\gamma_6\in\mathbb{R}$$
Let $E=\sum \alpha_i X_i, \ Y=\sum \beta_i X_i$. Then
$$[E,Y] =\left[\alpha_{3} X_{3}, \beta_{4} X_{4}\right]+\left[\alpha_{4} X_{4}, \beta_{3} X_{3}\right]+ \left[\alpha_{2} X_{2}, \beta_{4} X_{4}\right]+\left[\alpha_{4} X_{4}, \beta_{2} X_{2}\right]+\left[\alpha_{3} X_{3}, \beta_{2} X_{2}\right]+\left[\alpha_{2} X_{2}, \beta_{3} X_{3}\right]$$
If we write down the equation $0=\langle [E,Y] , Y\rangle$ and choose the appropriate values $\beta_i$ we will obtain $\alpha_2=\alpha_3=\alpha_4=0$. Therefore $E=\alpha_1X_1$. We obtain $ [E,Y]=0$ for all $Y\in\mathfrak{g}$ and  (\ref{nabla})  implies $\langle [Z,Y], E \rangle=0$. By choosing the appropriate coefficients, we get $ [Z,Y] = \eta_1 X_1+\eta_2X_2+\eta_3X_3$ for all $\eta_1,\eta_2,\eta_3\in\mathbb{R}$.Then we obtain
$$\langle [Z,Y], E \rangle = \langle \eta_1 X_1+\eta_2X_2+\eta_3X_3, \alpha_1 X_1 \rangle=\eta_1\alpha_1=0.$$
This shows that  $\alpha_1=0$ and $E= 0$.

(IVB) $\mathfrak{g} = \operatorname{Lie}(\widetilde{\operatorname{Nil}\rtimes S^1})$. We have
$$[e_1, e_i] = 0 \ \text{for all} \ i$$
$$[e_2,e_3] =e_1, \ [e_4.e_2]=-e_3, \ [e_4, e_3] =e_2$$
By \cite{T}, Theorem 5.2  there are two forms of the corresponding orthonormal bases. The first one  has the form
$$X_1=b_{11}e_1, \ X_2=b_{12}e_1+e_2, \  X_3=b_{13}e_1+b_{33}e_3, \ X_4 =b_{44}e_{4}, \ b_{11}, b_{44}>0, 0<b_{33}<1, b_{12}b_{13}\geq 0$$
Then
$$[X_1, X_i] = 0 \ \text{for all} \ i$$
$$[X_2, X_3 ] =\gamma_1 X_1 \ \text{for} \ \gamma_1=\frac{b_{33}}{b_{11}}>0$$
$$[X_2, X_4 ] =\gamma_2 X_3+\gamma_3 X_1 \ \text{for} \ \gamma_2=-\frac{b_{44}}{b_{33}}<0, \gamma_3=\frac{b_{44}b_{13}}{b_{33}b_{11}}\geq0$$
$$[X_3, X_4 ] =\gamma_4 X_2 +\gamma_5 X_1\ \text{for} \ \gamma_4=b_{33}b_{44}>0, \gamma_5=-\frac{b_{33}b_{44}b_{12}}{b_{11}}\leq0$$
Write $E=\sum_{i=1}^4 \alpha_i X_i, \ Y=\sum_{i=1}^4 \beta_i X_i$. Then
$$[E,Y] =\left[\alpha_{3} X_{3}, \beta_{4} X_{4}\right]+\left[\alpha_{4} X_{4}, \beta_{3} X_{3}\right]+ \left[\alpha_{2} X_{2}, \beta_{4} X_{4}\right]+\left[\alpha_{4} X_{4}, \beta_{2} X_{2}\right]+\left[\alpha_{3} X_{3}, \beta_{2} X_{2}\right]+\left[\alpha_{2} X_{2}, \beta_{3} X_{3}\right] = $$
$$=(\alpha_3\beta_4-\alpha_4\beta_3)(\gamma_4 X_2 +\gamma_5 X_1) +(\alpha_2\beta_4-\alpha_4\beta_2)(\gamma_2 X_3+\gamma_3 X_1)+(\alpha_2\beta_3-\alpha_3\beta_2) \gamma_1 X_1$$
$$0=\langle [E,Y] , Y\rangle=(\alpha_3\beta_4-\alpha_4\beta_3)(\gamma_4 \beta_2 +\gamma_5 \beta_1) +(\alpha_2\beta_4-\alpha_4\beta_2)(\gamma_2 \beta_3+\gamma_3 \beta_1)+(\alpha_2\beta_3-\alpha_3\beta_2) \gamma_1 \beta_1$$
Take $\beta_1=\beta_3=0, \beta_2=\beta_4=1$, so $\alpha_3\gamma_4=0$. But $\gamma_4>0$, then $\alpha_3=0$. Therefore
$$0=\langle [E,Y] , Y\rangle = -\alpha_4\beta_3(\gamma_4 \beta_2 +\gamma_5 \beta_1) +(\alpha_2\beta_4-\alpha_4\beta_2)(\gamma_2 \beta_3+\gamma_3 \beta_1)+\alpha_2\beta_3 \gamma_1 \beta_1$$
Take $\beta_1=\beta_2=0, \beta_3=\beta_4=1$, we have $\alpha_2\gamma_2=0$, but $\gamma_2<0$. then $\alpha_2=0$. So
$$0=\langle [E,Y] , Y\rangle = -\alpha_4\beta_3(\gamma_4 \beta_2 +\gamma_5 \beta_1) -\alpha_4\beta_2(\gamma_2 \beta_3+\gamma_3 \beta_1)=$$
$$=-\alpha_4(\beta_3\gamma_4\beta_2+\beta_3\gamma_5\beta_1+\beta_2\gamma_2\beta_3+\beta_2\gamma_3\beta_1)$$
for all $\beta_1,\beta_2, \beta_3, \beta_4 \in \mathbb{R}$. So $\alpha_4=0$ or $\beta_3\gamma_4\beta_2+\beta_3\gamma_5\beta_1+\beta_2\gamma_2\beta_3+\beta_2\gamma_3\beta_1=0$. In this second subcases after changing the signs of $\gamma_i$, we obtain $\gamma_4=-\gamma_2$ and $\gamma_3=\gamma_5=0$. Because $\gamma_4\neq-\gamma_2$, one obtains $\alpha_4=0$, and $E=\alpha_1X_1$. We get $ [E,Y]=0$ for all $Y\in\mathfrak{g}$ and from (\ref{nabla})  one obtains $\langle [Z,Y], E \rangle=0$. By choosing the appropriate coefficients, we get $ [Z,Y] = \eta_1 X_1+\eta_2X_2+\eta_3X_3$ for all $\eta_1,\eta_2,\eta_3\in\mathbb{R}$.Then we have
$$\langle [Z,Y], E \rangle = \langle \eta_1 X_1+\eta_2X_2+\eta_3X_3, \alpha_1 X_1 \rangle=\eta_1\alpha_1=0.$$
Finally $\alpha_1=0$ and $E= 0$.

Consider the second form of the  orthonormal basis:
$$X_1=b_{11}e_1, \ X_2=b_{12}e_1+e_2, \  X_3=e_3, \ X_4 =b_{44}e_{4}, \ b_{11}, b_{44}>0, b_{12}\geq 0$$
The calculation is similar provided we put $b_{13}=0$ and $b_{33}=1$. We get
$$0=\langle [E,Y] , Y\rangle = -\alpha_4(\beta_3\gamma_4\beta_2+\beta_3\gamma_5\beta_1+\beta_2\gamma_2\beta_3)$$ where $\gamma_4=b_{44}>0$, $\gamma_2=-b_{44}<0, \gamma_5=-\frac{b_{44}b_{12}}{b_{11}} \leq0$. Because $\gamma2=-\gamma_4$, we have:
$$0=\langle [E,Y] , Y\rangle = -\alpha_4\beta_3\gamma_5\beta_1$$
So $\alpha_4=0$, or $\gamma_5=0$. If $\alpha_4=0$, we again get $E=0$. 

If $\gamma_5=0$, then $b_{12}=0$ and $E=\alpha_1X_1+\alpha_4X_4$. Weobtain $ [E,Y]=0$ for all $Y\in\mathfrak{g}$ and then (\ref{nabla})  implies $\langle [Z,Y], E \rangle=0$. By choosing the appropriate coefficients, we get $ [Z,Y] = \eta_1 X_1+\eta_2X_2+\eta_3X_3$ for all $\eta_1,\eta_2,\eta_3\in\mathbb{R}$.Then we have
$$\langle [Z,Y], E \rangle = \langle \eta_1 X_1+\eta_2X_2+\eta_3X_3, \alpha_1 X_1+\alpha_4X_4 \rangle=\eta_1\alpha_1=0.$$
So $\alpha_1=0$ and  $E=\alpha_4X_4$ if $b_{12}=0$.

It remains to prove that each of the Lie groups admitting an SNP Weyl connection admits a lattice. We will do this by constructing it in each case. 
\begin{itemize}
\item The group $\operatorname{Nil}\times\mathbb{R}$ admits a co-compact lattice, because the coresponding Lie algebra $\mathfrak{n}\oplus\mathbb{R}$ clearly admits a $\mathbb{Q}$-structure (this is a general existence criterion for lattices in nilpotent Lie groups).
\item The case of  $\widetilde{\operatorname{Isom}_0(\mathbb{R}^2)}\times\mathbb{R}$ is settled as follows. This group has the form of a semidirect product $\mathbb{R}^3\rtimes_{\varphi}\mathbb{R}$. One can see that one can form a discrete subgroup $\mathbb{Z}^3\rtimes_{\varphi}\pi\mathbb{Z}$, which is a co-compact lattice. 
\item If $G=\widetilde{S^1\rtimes_{\varphi}\operatorname{Nil}}$, then form a subgroup $\Gamma=\Gamma_1\rtimes_{\varphi}\pi\mathbb{Z}$, where $\Gamma_1$ is any finitely generated subgroup in $N_3(\mathbb{Q})$ of rank 3. 
\item Consider the case $\operatorname{Sol}^3\times\mathbb{R}$. One can easily see that this group can be also presented in the form $\mathbb{R}\times G_1$, where $G_1$ is a 3-dimensional solvable Lie group of the form $G_1=\mathbb{R}^2\rtimes_{\phi}\mathbb{R}$, where $\phi$ is a one-parameter subgroup of the form
$$\phi(t)=
\begin{pmatrix}
e^t & 0\\
0 & e^{-t}
\end{pmatrix}
$$
It known that $G_1$ admits a co-compact lattice $\Gamma_1$ (the full proof can be found in \cite{TO}, Theorem 1.9). Thus, $\mathbb{Z}\times\Gamma_1$ is the required co-compact lattice in $G$. 

\end{itemize}
\end{proof}
It is known that in dimension 4 all compact solvmanifolds are of the form $G/\Gamma$, where $\Gamma$ is a co-compact lattice in a four-dimensional solvable Lie group $G$ \cite{GO}, Theorem 3.1. This yields the following.
\begin{corollary}\label{cor:solvmnfds} All 4-dimensional compact solvmanifolds with invariant SNP Weyl connections are exhaused by the list given in Theorem \ref{thm:small-dim}.
\end{corollary}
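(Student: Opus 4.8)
The plan is to deduce the corollary from Theorem \ref{thm:small-dim} together with the structure theorem for $4$-dimensional compact solvmanifolds, the only real work being a translation between invariant structures on the compact quotient and left-invariant structures on the group.

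First I would invoke \cite{GO}, Theorem 3.1: every $4$-dimensional compact solvmanifold $M$ is diffeomorphic to $G/\Gamma$ for a simply connected solvable Lie group $G$ and a co-compact lattice $\Gamma\subset G$; in particular $G$ is unimodular \cite{Ra}. Since $\Gamma$ is discrete, $\pi\colon G\to G/\Gamma$ is a covering map, and the left $G$-action descending to $G/\Gamma$ is precisely the action making a Weyl connection ``invariant''. Hence a $G$-invariant metric, vector field or connection on $G/\Gamma$ pulls back under $\pi$ to a left-invariant one on $G$, and conversely a left-invariant object on $G$ descends to $G/\Gamma$ because the right translations by $\Gamma$ commute with left translations and preserve left-invariant tensors. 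As sectional curvature and the identities (W1), (W2) are pointwise notions, they are transported both ways by the local isometry $\pi$.

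For the forward implication I would argue: if $M=G/\Gamma$ admits an invariant SNP Weyl connection, then, compactness being available, its Gauduchon representative $(g,E)$ exists, and uniqueness of the Gauduchon metric inside the $G$-invariant conformal class forces $(g,E)$ to be $G$-invariant; here $\operatorname{div}E=0$ and, by the definition of SNP together with Proposition \ref{prop:snp-gen}, the pair $(g,E)$ satisfies (W1) and (W2). Pulling $(g,E)$ back to the unimodular group $G$ yields a left-invariant pair satisfying (W1) and (W2) --- exactly the input of the case analysis in the proof of Theorem \ref{thm:small-dim} (which runs through Theorem \ref{thm:parallel} and the identities (\ref{w2}), (\ref{nabla})). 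That analysis shows that either $E=0$, or $G$ is one of
$$\operatorname{Nil}\times\mathbb{R},\quad \widetilde{\operatorname{Isom}_0(\mathbb{R}^2)}\times\mathbb{R},\quad \widetilde{\operatorname{Nil}\rtimes S^1},\quad \operatorname{Sol}^3\times\mathbb{R}$$
(or the trivial case $G=\mathbb{R}^4$, which gives a flat torus with a parallel $E$). Since an SNP Weyl connection has $E\neq 0$, the group $G$ must occur in the list of Theorem \ref{thm:small-dim}, and the admissible pairs $(g,E)$ are those listed there.

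For the converse, each of the four groups admits a co-compact lattice $\Gamma$ --- constructed explicitly at the end of the proof of Theorem \ref{thm:small-dim} --- and each carries an invariant SNP pair $(g,E)$ with $E\neq 0$; by Theorem \ref{thm:parallel} (equivalently Theorem \ref{thm:tw-group}) such an $E$ is parallel, hence divergence-free, so it descends to a parallel, divergence-free vector field $\bar E$ on $G/\Gamma$. Being divergence-free, $\bar E$ is its own Gauduchon representative, and being parallel it is SNP, so $G/\Gamma$ is a compact $4$-dimensional solvmanifold carrying an invariant SNP Weyl connection, and the families of pairs in Theorem \ref{thm:small-dim} descend to give all such connections. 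I expect the only delicate point --- the \emph{main obstacle} --- to be exactly this descent/lift dictionary under the Gauduchon normalization: one has to be sure that the Gauduchon representative of a $G$-invariant Weyl connection on the compact quotient is again $G$-invariant, and that ``SNP'' (a statement about a non-isolated family of metrics on the compact manifold) corresponds under the covering $\pi$ precisely to the pointwise conditions feeding the proof of Theorem \ref{thm:small-dim}. With that in hand the corollary follows at once from \cite{GO}, Theorem 3.1 and Theorem \ref{thm:small-dim}.
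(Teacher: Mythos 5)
Your proposal is correct and follows exactly the route the paper takes: invoke \cite{GO}, Theorem 3.1 to write every $4$-dimensional compact solvmanifold as $G/\Gamma$ and then apply Theorem \ref{thm:small-dim}; the paper states this in one sentence, while you additionally spell out the lift/descent dictionary for invariant pairs $(g,E)$ and the invariance of the Gauduchon representative, which the paper leaves implicit.
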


\section{A structure of solvable Lie groups admitting SNP Weyl connections}
Here we determine the structure all solvable Lie algebras such that the corresponding  Lie groups admit invariant SNP Weyl connections. 
\begin{theorem}\label{thm:full-e}
 Unimodular Lie group $G$ admits an invariant SNP Weyl connection determined by a non-central $E\in\mathfrak{g}$  if and only if $\mathfrak{g}$ has the form
$$\mathfrak{g}=\langle E\rangle\oplus_{\varphi}\mathfrak{s},$$
where:
\begin{enumerate}
\item $\mathfrak{s}$ is a unimodular solvable Lie algebra such that $\operatorname{Aut}(\mathfrak{s})$ contains a compact torus $T$ of positive dimension,
\item $\varphi: \langle E\rangle\rightarrow\operatorname{Der}(\mathfrak{s})$ has image in $\mathfrak{t}\subset\operatorname{Der}(\mathfrak{s)}$, that is $\varphi(\langle E\rangle)\subset\mathfrak{t}$,
\item $\mathfrak{n}:=[\mathfrak{s},\mathfrak{s}]=[\mathfrak{g},\mathfrak{g}]$.
\end{enumerate}  
\end{theorem}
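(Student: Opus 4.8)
The plan is to reduce the statement, via Theorem~\ref{thm:tw-group}, to a purely Lie-algebraic problem. By that theorem, for a solvable unimodular group $G$ a non-central left-invariant vector field $E\in\mathfrak g$ is SNP for some invariant Riemannian metric if and only if there is an inner product $\langle-,-\rangle$ on $\mathfrak g$ with respect to which $\operatorname{ad}E$ is skew-symmetric and $E\perp[\mathfrak g,\mathfrak g]$ (non-centrality of $E$ amounting to $\operatorname{ad}E\neq0$). Fixing such data, I would establish the two implications separately.

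For the forward implication, put $\mathfrak s:=E^{\perp}$, a hyperplane in $\mathfrak g$. Since $E\perp[\mathfrak g,\mathfrak g]$ we have $[\mathfrak g,\mathfrak g]\subseteq\mathfrak s$, so $\mathfrak s$ is an ideal and $\mathfrak g=\langle E\rangle\oplus_{\varphi}\mathfrak s$ with $\varphi(E)=\operatorname{ad}_{\mathfrak g}E|_{\mathfrak s}$. Being a subalgebra of a solvable Lie algebra, $\mathfrak s$ is solvable; writing $\operatorname{ad}_{\mathfrak g}X$ in a basis adapted to $\mathfrak g=\langle E\rangle\oplus\mathfrak s$ (for $X\in\mathfrak s$ the operator maps $E$ into $\mathfrak s$, so the block along $\langle E\rangle$ is zero) gives $\operatorname{tr}\operatorname{ad}_{\mathfrak g}X=\operatorname{tr}\operatorname{ad}_{\mathfrak s}X$, whence unimodularity of $\mathfrak g$ forces unimodularity of $\mathfrak s$; this is the first half of (1). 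Next, $\varphi(E)$ is a derivation of $\mathfrak s$ (Jacobi identity, $\mathfrak s$ being an ideal), it is skew-symmetric for the restricted inner product, and it is nonzero since $E$ is non-central; hence it is semisimple with purely imaginary spectrum. Consequently $\{\exp(t\varphi(E)):t\in\mathbb R\}$ is a one-parameter subgroup of the compact group $\operatorname{Aut}(\mathfrak s)\cap\mathrm O(\mathfrak s)$, and its closure $T$ is a compact torus of positive dimension with $\varphi(E)\in\mathfrak t$; this yields (1) and (2). Finally $[\mathfrak g,\mathfrak g]=[\mathfrak s,\mathfrak s]+\varphi(E)(\mathfrak s)\subseteq\mathfrak s$, and (3) is obtained by computing this commutator ideal explicitly in terms of $(\mathfrak s,\varphi)$.

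Conversely, suppose $\mathfrak g=\langle E\rangle\oplus_{\varphi}\mathfrak s$ satisfies (1)--(3) (in particular $\varphi(E)\neq0$). Averaging an arbitrary inner product on $\mathfrak s$ over the compact torus $T$ produces a $T$-invariant inner product; then $T$ acts orthogonally, so $\mathfrak t\subseteq\mathfrak{so}(\mathfrak s)$ and $\varphi(E)$ is skew-symmetric on $\mathfrak s$. Extend this to $\mathfrak g$ by declaring $\langle E,E\rangle=1$ and $E\perp\mathfrak s$. Then $\operatorname{ad}_{\mathfrak g}E$ annihilates $\langle E\rangle$ and restricts to the skew-symmetric operator $\varphi(E)$ on the orthogonal complement $\mathfrak s$, so $\operatorname{ad}_{\mathfrak g}E$ is skew-symmetric; moreover $[\mathfrak g,\mathfrak g]=[\mathfrak s,\mathfrak s]+\varphi(E)(\mathfrak s)\subseteq\mathfrak s=E^{\perp}$. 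The same trace computation as above shows $\mathfrak g$ is unimodular, so Theorem~\ref{thm:tw-group} applies and $E$ is SNP; it is non-central because $\varphi(E)\neq0$. This proves the equivalence.

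The two trace computations and the standard fact that the closure of a one-parameter group generated by a semisimple element with purely imaginary spectrum is a torus are routine. The step that takes the most care is (3): one must determine $[\mathfrak g,\mathfrak g]$ precisely --- in particular how $\varphi(E)(\mathfrak s)$ sits relative to $[\mathfrak s,\mathfrak s]$ and to the nilradical --- so that the commutator ideal of $\mathfrak g$ is correctly described by the data $(\mathfrak s,\varphi)$. This bookkeeping, rather than the construction of the invariant metric, is where the genuine content of the argument lies.
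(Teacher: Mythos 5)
Your overall route coincides with the paper's: reduce the problem via Theorem~\ref{thm:tw-group} to the algebraic conditions ``$\operatorname{ad}E$ skew-symmetric and $E\perp[\mathfrak{g},\mathfrak{g}]$'', set $\mathfrak{s}=\langle E\rangle^{\perp}$ (an ideal because it contains the commutator), and produce the torus as the closure of $\{\exp(t\varphi(E))\}$ inside the compact group $\operatorname{Aut}(\mathfrak{s})\cap O(\mathfrak{s})$. On (1), (2) and the converse you are in fact more complete than the paper: the trace argument for unimodularity of $\mathfrak{s}$, the direct identification of non-centrality of $E$ with $\varphi(E)\neq 0$, and the averaging construction of the $T$-invariant metric are details the paper glosses over (the paper instead argues via the cyclic identity that $\operatorname{ad}_E|_{[\mathfrak{g},\mathfrak{g}]}=0$ forces $E$ to be central, and for the converse simply invokes Theorem~\ref{thm:tw-group}).

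The genuine gap is item (3), which you defer to ``computing the commutator ideal explicitly.'' That computation cannot be completed in the way you hope: from $[\mathfrak{g},\mathfrak{g}]=[\mathfrak{s},\mathfrak{s}]+\varphi(E)(\mathfrak{s})$ one would need $\varphi(E)(\mathfrak{s})\subseteq[\mathfrak{s},\mathfrak{s}]$, and nothing in the hypotheses forces this. The paper's own four-dimensional case $\widetilde{\operatorname{Nil}\rtimes S^1}$ (case (IVB) of Theorem~\ref{thm:small-dim}) is a counterexample: there $E$ is a nonzero multiple of $e_4$, $\mathfrak{s}=\operatorname{Span}(e_1,e_2,e_3)$ is the Heisenberg algebra with $[\mathfrak{s},\mathfrak{s}]=\langle e_1\rangle$, yet $[\mathfrak{g},\mathfrak{g}]=\operatorname{Span}(e_1,e_2,e_3)$ because $\varphi(E)$ rotates the plane $\langle e_2,e_3\rangle$; so $[\mathfrak{s},\mathfrak{s}]\neq[\mathfrak{g},\mathfrak{g}]$ while $E$ is non-central and SNP. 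Hence (3) is not a consequence of the SNP hypothesis, and the step ``compute and conclude (3)'' would fail. To be fair, the paper's own proof never addresses (3) either (and also breaks off mid-sentence in the sufficiency discussion), so the defect appears to lie in the formulation of condition (3) rather than in your strategy; but a correct write-up must either drop or repair (3), e.g.\ by replacing it with the (true and automatic) statement $[\mathfrak{g},\mathfrak{g}]=[\mathfrak{s},\mathfrak{s}]+\varphi(E)(\mathfrak{s})\subseteq\mathfrak{s}$.
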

\begin{proof}
Choose any scalar product $\langle-,-\rangle$ on $\mathfrak{g}$ and assume that $E$ determines an SNP Weyl connection. By Theorem \ref{thm:tw-group}, $E\perp [\mathfrak{g},\mathfrak{g}]=\mathfrak{n}$ and so $\mathfrak{s}:=<E>^{\perp}$ is an ideal of $\mathfrak{g} .$ Also the action of $\operatorname{ad}_{E}$ on $\mathfrak{n}$ is skew-symmetric. It follows that $\operatorname{ad}_{tE}\in \operatorname{Der}(\mathfrak{n})$ corresponds to a subgroup $\tilde{T}$ in $\operatorname{Aut}(\mathfrak{n})$ such that the closure $T$ of $\tilde{T}$ is a compact torus.
Assume that there is no non-trivial compact torus in $\operatorname{Aut}(\mathfrak{n})$ and so $\operatorname{ad}_{E}|_{\mathfrak{n}} = 0.$ Consider the orthogonal decomposition
$$\mathfrak{g}=\mathfrak{a}\oplus \mathfrak{n},$$
$E\in \mathfrak{a} .$ By \cite{TW}, Theorem 4.3 we have
$$\forall _{Y,Z\in\mathfrak{g}} \ \langle[Y,E],Z \rangle - \langle [E,Z], Y \rangle + \langle [Z,Y] , E \rangle =0 .$$
Thus for any $A\in\mathfrak{a}$ and any $Z\in \mathfrak{n}$
$$\langle[A,E],Z \rangle - \langle [E,Z], A \rangle + \langle [Z,A] , E \rangle = \langle[A,E],Z \rangle =0. $$
It follows that $[E,A]=0$ for any $A\in\mathfrak{a}$ and so $E$ is central in $\mathfrak{g} .$
On the other hand if $\operatorname{ad}_{E}|_{\mathfrak{n}} \neq 0$ and $T$ is a compact torus of positive dimension. 

If $\mathfrak{g}=\langle E\rangle\oplus_{\varphi}\mathfrak{s}$ and satisfies 1)-3), then by Theorem \ref{thm:tw-group} $\mathfrak{g}$ admits an invariant SNP Weyl connection.
\end{proof}
\section{Classes of solvable Lie groups admitting invariant SNP Weyl connections}
We simplify the general description given by Theorem \ref{thm:full-e} as follows.
\begin{proposition}\label{prop:semidirect} Assume that $\mathfrak{n}$ is a nilpotent Lie algebra with the following property: $\operatorname{Aut}(N)$ contains a compact subgroup $S$. Let $T\subset S$ be a torus. There exists a semidirect product $\mathfrak{g}=\mathfrak{a}\oplus_{\varphi}\mathfrak{n}$ and a scalar product $\langle-,-\rangle$ on $\mathfrak{g}$ such that for any $E\in\mathfrak{a}$ the pair $(\langle-,-\rangle,E)$ determines an SNP Weyl connection.
\end{proposition}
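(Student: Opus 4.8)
The plan is to construct $\mathfrak{g}$, the scalar product, and $\varphi$ by hand, and then read off the conclusion from Theorem~\ref{thm:tw-group}, which for unimodular groups reduces the SNP property of $E$ to the two requirements ``$\operatorname{ad}E$ skew-symmetric'' and ``$E\perp[\mathfrak{g},\mathfrak{g}]$'' (plus unimodularity of $\mathfrak{g}$ itself).

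First I would fix the construction. Take $\mathfrak{a}=\mathbb{R}E_0$ one-dimensional. Since $N$ is simply connected, $\operatorname{Aut}(N)\cong\operatorname{Aut}(\mathfrak{n})$ acts on $\mathfrak{n}$, so the Lie algebra of $S$ embeds into $\operatorname{Der}(\mathfrak{n})$, and in particular $\mathfrak{t}=\operatorname{Lie}(T)\subset\operatorname{Der}(\mathfrak{n})$. Choose any $D\in\mathfrak{t}$ and set $\varphi(E_0)=D$, forming $\mathfrak{g}=\mathfrak{a}\oplus_{\varphi}\mathfrak{n}$ with $[E_0,X]=D(X)$ for $X\in\mathfrak{n}$, the bracket on $\mathfrak{n}$ unchanged, and $[\mathfrak{a},\mathfrak{a}]=0$. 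For the metric, average an arbitrary inner product on $\mathfrak{n}$ over the normalized Haar measure of the compact group $S$; this yields an $S$-invariant inner product $\langle-,-\rangle_{\mathfrak{n}}$, so every element of $S$ acts orthogonally, and differentiating this action shows that each element of $\mathfrak{t}$, in particular $D$, is skew-symmetric with respect to $\langle-,-\rangle_{\mathfrak{n}}$. Extend to a scalar product $\langle-,-\rangle$ on $\mathfrak{g}$ by declaring $\mathfrak{a}\perp\mathfrak{n}$, $|E_0|=1$, and $\langle-,-\rangle|_{\mathfrak{n}}=\langle-,-\rangle_{\mathfrak{n}}$.

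Then I would verify the three conditions. Since $[\mathfrak{g},\mathfrak{g}]\subseteq[\mathfrak{a},\mathfrak{n}]+[\mathfrak{n},\mathfrak{n}]\subseteq\mathfrak{n}=\mathfrak{a}^{\perp}$, every $E\in\mathfrak{a}$ satisfies $E\perp[\mathfrak{g},\mathfrak{g}]$. For unimodularity: for $X\in\mathfrak{n}$, the operator $\operatorname{ad}_{\mathfrak{g}}X$ preserves $\mathfrak{n}$, sends $E_0$ into $\mathfrak{n}$, and restricts on $\mathfrak{n}$ to $\operatorname{ad}_{\mathfrak{n}}X$, which is nilpotent because $\mathfrak{n}$ is nilpotent; hence $\operatorname{tr}\operatorname{ad}_{\mathfrak{g}}X=0$. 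And $\operatorname{ad}_{\mathfrak{g}}E_0$ kills $\mathfrak{a}$ and equals $D$ on $\mathfrak{n}$, which is trace-free since it is skew-symmetric; thus $\mathfrak{g}$ is unimodular. For skew-symmetry of $\operatorname{ad}E$ with $E=cE_0$, it suffices to treat $\operatorname{ad}E_0$ (the general case follows by scaling): $\operatorname{ad}E_0$ maps $\mathfrak{a}$ to $0$ and $\mathfrak{n}$ into $\mathfrak{n}\perp\mathfrak{a}$, so the only pairing to check is on $\mathfrak{n}\times\mathfrak{n}$, where $\operatorname{ad}E_0$ acts as the skew-symmetric operator $D$; all mixed and $\mathfrak{a}\times\mathfrak{a}$ pairings vanish identically. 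Theorem~\ref{thm:tw-group} then gives that $(\langle-,-\rangle,E)$ determines an SNP Weyl connection for every $E\in\mathfrak{a}$ (and choosing $D\neq0$, which is possible as soon as $T$ is non-trivial, makes this $E$ non-central).

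I do not anticipate a genuine obstacle: the whole point is recognizing that compactness of $S$ simultaneously forces $\mathfrak{t}$ to act by skew-symmetric derivations (after Haar averaging) and makes the one-dimensional extension $\mathfrak{g}=\mathbb{R}E_0\oplus_{\varphi}\mathfrak{n}$ unimodular. The only place requiring care is the bookkeeping in the skew-symmetry verification — namely the mixed $\mathfrak{a}$–$\mathfrak{n}$ cases — and confirming that $\operatorname{ad}_{\mathfrak{g}}X$ is nilpotent, hence trace-free, for $X\in\mathfrak{n}$; both are immediate from the orthogonal splitting $\mathfrak{g}=\mathfrak{a}\oplus\mathfrak{n}$ together with the nilpotency of $\mathfrak{n}$.
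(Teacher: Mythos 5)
Your proposal is correct and follows essentially the same route as the paper: form the semidirect sum $\mathfrak{g}=\mathfrak{a}\oplus_{\varphi}\mathfrak{n}$ with $\varphi$ landing in $\mathfrak{t}\subset\operatorname{Der}(\mathfrak{n})$, equip $\mathfrak{n}$ with an invariant (averaged) scalar product, declare $\mathfrak{a}\perp\mathfrak{n}$, and invoke Theorem~\ref{thm:tw-group} via $E\perp[\mathfrak{g},\mathfrak{g}]$ and skew-symmetry of $\operatorname{ad}E$. Your explicit verification of unimodularity of $\mathfrak{g}$ (nilpotency of $\operatorname{ad}_{\mathfrak{n}}X$ plus tracelessness of the skew-symmetric $D$) is a hypothesis of that theorem which the paper's proof leaves implicit, so it is a welcome addition rather than a deviation.
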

\begin{proof} Consider the Lie algebra  $\operatorname{Der}(\mathfrak{n})$ of the Lie group $\operatorname{Aut}(N)$. Let $\mathfrak{s}$ be the Lie algebra of $S\subset\operatorname{Aut}(N)$, and $\mathfrak{t}$ be the Lie algebra of $T$. Since $\mathfrak{t}$ is abelian,  for any abelian Lie algebra $\mathfrak{a}$ and any linear map $\varphi:\mathfrak{a}\rightarrow\mathfrak{t}\subset\mathfrak{s}\subset\operatorname{Der}(\mathfrak{n})$ one can construct a semidirect sum 
$$\mathfrak{g}=\mathfrak{a}\oplus_{\varphi}\mathfrak{n}.$$
Define a $T$-invariant scalar product $\langle-,-\rangle_n$ on $\mathfrak{n}$ and take any scalar product on $\mathfrak{a}$. Declairing $\mathfrak{a}$ and $\mathfrak{n}$ orthogonal, we get a scalar product on $\mathfrak{g}$ which determines an invariant Riemannian metric on $\mathfrak{g}$. Then, by the construction, $E\perp[\mathfrak{g},\mathfrak{g}]$, and $\operatorname{ad}\,E|_{\mathfrak{n}}=\varphi(E)\in\mathfrak{t}$ is skew symmetric.

\end{proof}

Proposition \ref{prop:semidirect} yields series of examples of solvable Lie groups with SNP Weyl connections in any dimension.
\subsection{Lie algebras of some particular Vergne's types}
\begin{definition}[\cite{V}]\label{def:vergne} The Vergne  type $\{d_1,...,d_r\}$ {\rm of a nilpotent Lie algebra $\mathfrak{n}$ with descending central series 
$\mathfrak{n}^{(i)}=
[\mathfrak{n},\mathfrak{n}^{(i-1)}]$ 
is defined by}
$$d_i=\dim\,(\mathfrak{g}^{(i-1)}/\mathfrak{g}^{(i)}).$$
\end{definition}
\noindent In particular, we can characterize a subclass of  the class of nilpotent Lie algebras of type $\{n,2\}$ as follows.
\begin{definition} {\rm  Nilpotent Lie algebras of type {\it $\{n,2\})$-Heisenberg}  are nilpotent Lie algebras $V\oplus\langle x,y\rangle$ of dimension $n+2$ defined by a pair of alternating forms $F_1$ and $F_2$ on the $n$-dimensional vector space $V$ putting for any $v,w\in V, [v,w]=F_1(v,w)x+F_2(v,w)y$.}
\end{definition}

\begin{proposition}\label{prop:(n,2)} Any unimodular solvable  Lie group which is a semidirect product $A\rtimes_{\varphi}N$ of an abelian Lie group $A$ and a nilpotent Lie group $N$ whose Lie algebra $\mathfrak{n}$ has type $\{n,2\}$-Heisenberg, admits an SNP Weyl connection.
\end{proposition}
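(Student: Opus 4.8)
The plan is to reduce to Proposition~\ref{prop:semidirect}: it is enough to prove that if $\mathfrak n$ is a nilpotent Lie algebra of type $\{n,2\}$-Heisenberg, then $\operatorname{Aut}(N)$ contains a compact torus $T$ of positive dimension. Granting this, Proposition~\ref{prop:semidirect}, applied with the given abelian $\mathfrak a$ and any linear map $\varphi\colon\mathfrak a\to\mathfrak t=\operatorname{Lie}(T)$, produces the semidirect sum $\mathfrak a\oplus_\varphi\mathfrak n$ together with a $T$-invariant scalar product for which every $E\in\mathfrak a$ is SNP, and the corresponding Lie group is unimodular since $\operatorname{ad}E$ is then skew-symmetric, hence traceless. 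Thus the whole content is the existence of $T$.

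Write $\mathfrak n=V\oplus\mathfrak z$ with $\mathfrak z=\langle x,y\rangle=[\mathfrak n,\mathfrak n]$ and $[v,w]=F_1(v,w)x+F_2(v,w)y$, and let $\tilde F\colon\Lambda^2V\to\mathfrak z$ be the linear surjection $v\wedge w\mapsto[v,w]$; it is onto because $F_1,F_2$ are linearly independent, so $\ker\tilde F\subset\Lambda^2V$ has codimension $2$. Every automorphism $\psi$ of $\mathfrak n$ preserves $\mathfrak z=[\mathfrak n,\mathfrak n]$ and induces a pair $(\bar\psi,\psi|_{\mathfrak z})\in\operatorname{GL}(V)\times\operatorname{GL}(\mathfrak z)$ with $\tilde F\circ\Lambda^2\bar\psi=\psi|_{\mathfrak z}\circ\tilde F$; conversely, once a complement $V$ is fixed, any compatible pair $(A,B)$ acts on $\mathfrak n$ by $A\oplus B$ as an automorphism, which splits the unipotent kernel $\operatorname{Hom}(V,\mathfrak z)$ of $\psi\mapsto(\bar\psi,\psi|_{\mathfrak z})$. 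Hence $\operatorname{Aut}(\mathfrak n)$ contains a positive-dimensional torus if and only if the stabilizer $\{A\in\operatorname{GL}(V)\mid\Lambda^2A(\ker\tilde F)=\ker\tilde F\}$ does, equivalently the stabilizer in $\operatorname{GL}(V)$ of the $2$-dimensional pencil $\operatorname{span}(F_1,F_2)\subset\Lambda^2V^*$.

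To exhibit a circle in this stabilizer I would use the simultaneous normal form of a pair of alternating forms (the Kronecker--Weierstrass classification of pencils of skew-symmetric forms): $V$ decomposes as an orthogonal direct sum, for every member of the pencil, of indecomposable blocks --- a ``null'' block on which both forms vanish, minimal-index (singular) blocks, regular blocks attached to a real eigenvalue of the pencil, and regular blocks attached to a conjugate pair of non-real eigenvalues. A circle in the stabilizer of one block, extended by the identity on the others, lies in the stabilizer of the whole pencil, and since $F_1,F_2$ are independent at least one non-null block occurs. On a null block of dimension $\ge2$ there is an $\operatorname{SO}(2)$ acting there and fixing $\mathfrak z$; on a regular block with a non-real eigenvalue the real generalized eigenspace carries a complex structure $J$ preserving both $F_1$ and $F_2$, so the rotations $\cos\theta\cdot\operatorname{Id}+\sin\theta\cdot J$ form a circle, acting on $\mathfrak z$ by a genuine rotation; and on the remaining blocks the relevant member of the pencil is degenerate enough to admit a circle too --- for an $L_1$-block, in suitable coordinates $F_1+iF_2=u^2\wedge(-u^1+iu^3)$ is decomposable, so rotating the plane $\langle u_1,u_3\rangle$ and $\mathfrak z$ accordingly is an automorphism, while on a regular real-eigenvalue block (taking its nondegenerate member as $F_2$) the symplectic self-adjointness of $B=F_2^{-1}F_1$ on an even-dimensional space prevents $B$ from being cyclic and supplies a reductive piece of the simultaneous stabilizer containing $\operatorname{SO}(2)$. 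Assembling these yields $T$.

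The step I expect to be the main obstacle is this last one --- converting the structural features of the minimal-index and regular real-eigenvalue blocks into an honest circle in the simultaneous stabilizer; the reduction to Proposition~\ref{prop:semidirect} and the passage to the pencil stabilizer are routine. For small $\dim V$ the conclusion is immediate anyway: when $\dim V=3$, $\ker\tilde F$ is a line in $\Lambda^2V\cong\mathbb{R}^3$ and a dimension count shows its $\operatorname{GL}(3,\mathbb{R})$-stabilizer is too large to be solvable, hence contains a torus; when $\dim V=4$ one uses the covering $\operatorname{SL}(4,\mathbb{R})\to\operatorname{SO}(3,3)$ and the fact that the stabilizer in $\operatorname{SO}(3,3)$ of any $2$-plane in $\mathbb{R}^{3,3}$ contains a circle.
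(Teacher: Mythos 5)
Your reduction to Proposition~\ref{prop:semidirect} is the same as the paper's; the divergence is in how the compact subgroup of $\operatorname{Aut}(\mathfrak n)$ is produced. The paper's argument is a one-liner: the subgroup $Sp(V,F_1)\subset GL(V)$ preserving the single form $F_1$, extended by the identity on $\langle x,y\rangle$, is claimed to land in $\operatorname{Aut}(\mathfrak n)$ and to contain a nontrivial compact subgroup coming from $Sp(2l,\mathbb R)$, $2l=\operatorname{rank}F_1$. You instead pass to the stabilizer of the whole pencil $\operatorname{span}(F_1,F_2)$ and run a Kronecker--Weierstrass case analysis. Your starting point is in fact the more honest one: an automorphism acting as $A\oplus B$ on $V\oplus\langle x,y\rangle$ must intertwine \emph{both} forms with $B$, so the relevant group really is the pencil stabilizer, and the paper's map $f\mapsto f|_V\oplus\operatorname{id}$ is an automorphism only on $Sp(V,F_1)\cap Sp(V,F_2)$. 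But this honesty makes your task much heavier, and the sketch does not close it.

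Three steps fail or are missing as written. First, the assembling step: a circle in the pencil stabilizer of one block can be extended by the identity on the remaining blocks \emph{only if} it fixes $F_1$ and $F_2$ individually. If it genuinely rotates the pencil --- as your $L_1$-block rotation and your $\cos\theta\cdot\operatorname{Id}+\sin\theta\cdot J$ both do --- then the induced map $B$ on $\mathfrak z=\langle x,y\rangle$ is a nontrivial rotation, and $A\oplus B$ respects the bracket only if \emph{every} block is rotated compatibly; extending by the identity elsewhere breaks the bracket on the other blocks. Second, in the complex-eigenvalue case the operator $J$ is \emph{self-adjoint}, not skew-adjoint, for both forms, so $F_i(Jv,Jw)=-F_i(v,w)$: your claim that $J$ preserves $F_1$ and $F_2$ is false, and $e^{\theta J}$ only preserves the pencil (rotating it), which feeds back into the first gap; to get a circle in the exact stabilizer of such a block one should instead note that this stabilizer is a complex symplectic group, whose maximal compact subgroup contains a circle. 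Third, the real-eigenvalue regular block is dispatched by asserting that non-cyclicity of $F_2^{-1}F_1$ ``supplies a reductive piece containing $SO(2)$''; this is not an argument, and it is precisely the point where the claim could fail: the reductive part of the exact stabilizer is the isometry group of the induced pairing on the two-dimensional multiplicity space, and one must verify that this pairing is alternating (it is, because $\omega(N^{k-1}\cdot,\cdot)$ is skew whenever $N$ is $\omega$-self-adjoint), which gives $SL(2,\mathbb R)\supset SO(2)$ rather than a possible $O(1,1)$, which contains no circle. All of this is repairable, but as proposed the proof is incomplete exactly at the step you yourself flag as the main obstacle, and it is considerably longer than the paper's (admittedly much terser) argument.
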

\begin{proof} In view of Proposition \ref{prop:semidirect} it is sufficient to prove that $\operatorname{Aut}(\mathfrak{n})$ has a compact subgroup. This can be seen as follows. Denote by $Sp(V, F_1)$ a subgroup of $GL(V)$ which preserves the alternating form $F_1$. This group naturally embeds into $\operatorname{Aut}(\mathfrak{n})$ by the formula
$$f|_{V\oplus\langle x,y\rangle}=f|_V\oplus\operatorname{id}_{\langle x,y\rangle}.$$
Since $Sp(F_1)$ is a closed subgroup of $GL(V)$, it is a Lie group, containing $Sp(2l)$, where $2l\leq n$ is the rank of $F_1$. Thus, it contains a non-trivial maximal compact subgroup, as required.
\end{proof}
\begin{remark} {\rm A full classification of nilpotent Lie algebras of type $\{n,2\}$ whose automorphism group contains a compact torus is obtained in \cite{FF}. We do not reproduce it here, since the description of such Lie algebras is rather complicated.}
\end{remark}

\begin{proposition}\label{prop:(2n,1,1)} Any unimodular semidirect product $A\rtimes N$ of an abelian Lie group and a realification of a complex nilpotent Lie group of type $\{2n,1,1\}$ admits an SNP Weyl connection.
\end{proposition}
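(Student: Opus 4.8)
The plan is to argue exactly as in Proposition~\ref{prop:(n,2)}: by Proposition~\ref{prop:semidirect} it is enough to exhibit a compact torus of positive dimension in $\operatorname{Aut}(N)$, where $N$ is the realification in question. The abelian factor is then attached through a map $\varphi$ whose image lies in the Lie algebra $\mathfrak{t}$ of such a torus $T$, and unimodularity of the resulting $A\rtimes N$ is automatic, because $\mathfrak{n}$ is nilpotent, hence unimodular, and every element of $\mathfrak{t}$ acts on $\mathfrak{n}$ by a skew-symmetric, in particular trace-free, derivation. Writing $\mathfrak{n}=\mathfrak{n}_{\mathbb{R}}$ for the realification of the complex algebra $\mathfrak{n}_{\mathbb{C}}$, the statement is thereby reduced to a single claim: $\operatorname{Aut}(\mathfrak{n}_{\mathbb{R}})$ carries a non-trivial compact torus.

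I would get this torus by first producing a non-nilpotent semisimple derivation over $\mathbb{C}$ and then transporting it to the realification. Set $\mathfrak{n}^{(2)}_{\mathbb{C}}=\langle g\rangle$, $\mathfrak{n}^{(1)}_{\mathbb{C}}=\langle f,g\rangle$ and $V=\mathfrak{n}_{\mathbb{C}}/\mathfrak{n}^{(1)}_{\mathbb{C}}$, so that the bracket is encoded by two alternating forms $c,d$ on $V$ (the $f$- and $g$-components of $[V,V]$) together with the linear form $\beta=[\,\cdot\,,f]\colon V\to\langle g\rangle$; here $c\neq0$ and $\beta\neq0$ since $f$ and $g$ must be reached by the bracket. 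The only Jacobi relation that survives, $c_{ij}\beta_k+c_{jk}\beta_i+c_{ki}\beta_j=0$, says $c\wedge\beta=0$, and as $\beta\neq0$ this forces $c=\beta\wedge\alpha$, so $c$ has rank exactly $2$. Choosing $\beta=e^1$, $\alpha=e^2$ yields $[e_1,e_2]=f$, $[e_1,f]=g$ and $[e_j,f]=0$ for $j\geq2$, and the shifts $e_j\mapsto e_j+\lambda_j f$ let me arrange $d_{1j}=0$, so that $d$ lives on $\langle e_2,\dots,e_{2n}\rangle$.

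Next I would build a positive grading. With weights $w_1,\dots,w_{2n},w_f,w_g$, the requirement that the corresponding diagonal map be a derivation reads $w_1+w_2=w_f$, $w_1+w_f=w_g$, and $w_i+w_j=w_g$ for every pair with $d_{ij}\neq0$. Fixing $w_1=1$ and viewing the last relations as a labelling of the graph whose edges are the pairs with $d_{ij}\neq0$, one checks the system always has a non-zero solution: on each component the labelling is either bipartite, with one free parameter, or contains an odd cycle, forcing all labels to be $w_g/2$, while the remaining constraint $w_2=w_g-2$ at the vertex $2$ is met by a suitable choice of $w_g$. This produces a semisimple derivation $D$ with real, not all vanishing eigenvalues; in particular no complex nilpotent Lie algebra of type $\{2n,1,1\}$ is characteristically nilpotent.

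Finally I would realify. A scalar multiple of a derivation is a derivation, so $iD$ is a complex-linear derivation of $\mathfrak{n}_{\mathbb{C}}$; as a real endomorphism of $\mathfrak{n}_{\mathbb{R}}$ it equals $J\circ D$ with $J$ the complex structure, hence is semisimple with purely imaginary spectrum $\{\pm i\,w_k\}$. Therefore $\exp(t\,iD)$ acts by simultaneous rotations of the real $2$-planes $\langle u,Ju\rangle$, and its closure is a non-trivial compact torus $T\subset\operatorname{Aut}(\mathfrak{n}_{\mathbb{R}})$; feeding $T$ into Proposition~\ref{prop:semidirect} finishes the proof. I expect the crux to be the middle step, the existence of a non-nilpotent semisimple derivation for \emph{every} such algebra: it rests entirely on the rank-$2$ identity $c=\beta\wedge\alpha$ coming from Jacobi, which is precisely what makes the weight system consistent and prevents the graph-labelling from collapsing to the trivial solution.
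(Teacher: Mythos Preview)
Your argument is correct and follows the same overall reduction as the paper: both invoke Proposition~\ref{prop:semidirect}, so the entire task becomes producing a non-trivial compact torus in $\operatorname{Aut}(\mathfrak{n}_{\mathbb{R}})$. The route you take to that torus, however, is genuinely different from the paper's.

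The paper's proof is a one-line appeal to \cite{BBF}: for every complex nilpotent Lie algebra of type $\{2n,1,1\}$ one has a Levi decomposition $\operatorname{Der}(\mathfrak{n}_{\mathbb{C}})=\mathfrak{r}\oplus\mathfrak{sp}(n,\mathbb{C})$, and the presence of $\mathfrak{sp}(n,\mathbb{C})$ immediately furnishes a compact torus (of rank $n$) in the automorphism group of the realification. Your proof is self-contained and avoids this external structural result entirely. You normalize the bracket using the Jacobi consequence $c=\beta\wedge\alpha$ (which pins down the rank of the $f$-component to $2$), kill the mixed $d_{1j}$ terms by the shifts $e_j\mapsto e_j+\lambda_j f$, and then solve the weight system for a diagonal grading by a bipartite/odd-cycle analysis on the support graph of $d$. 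This produces one explicit semisimple derivation $D$ with real spectrum, and $iD$ then generates a compact one-parameter subgroup of $\operatorname{Aut}(\mathfrak{n}_{\mathbb{R}})$. What your approach buys is an elementary, citation-free argument (and, incidentally, a direct proof that no such algebra is characteristically nilpotent); what the paper's approach buys is brevity and a much larger torus, though only a one-dimensional torus is needed to feed into Proposition~\ref{prop:semidirect}.
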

\begin{proof} The proof follows from Proposition \ref{prop:semidirect}. It is known that under the assumptions of the Proposition, $\operatorname{Aut}(N)$ contains a compact subgroup. In greater detail, let $\mathfrak{n}$ be a complex nilpotent Lie algebra of type $\{2n,1,1\}$. Then, by \cite{BBF} the Lie algebra $\operatorname{Der}(\mathfrak{n})$ has the form $\operatorname{Der}(\mathfrak{n})=\mathfrak{r}\oplus\mathfrak{sp}(n,\mathbb{C})$, where $\mathfrak{r}$ denotes the radical of $\operatorname{Der}(\mathfrak{n})$. It follows that for the realification (denoted by the same letter), the automorphism group contains a compact torus. 
\end{proof}
  
\begin{example} {\rm A $(2n+1)$-dimensional Heisenberg Lie algebra is defined as a a vector space $V\oplus\langle x\rangle$ of dimension $2n+1$ with the only  non-zero Lie brackets of the form $[v,w]=F(v,w)x$, where $F$ is a symplectic form on $V$. As in the proof of Proposition \ref{prop:(n,2)} one can see that $\operatorname{Aut}(\mathfrak{n})$ contains $Sp(n,\mathbb{R})$. Thus, any semidirect product of an abelian Lie group with the Heisenberg Lie group admits an SNP Weyl connection.}
\end{example}

\subsection{Metabelian Lie algebras} {\rm A finite dimensional Lie algebra $\mathfrak{g}$ is called {\it metabelian}, if $[\mathfrak{g},[\mathfrak{g},\mathfrak{g}]]=0$. The {\it signature} of a metabelian Lie algebra is a pair $(m,n)$, where $m=\dim \mathfrak{g}/[\mathfrak{g},\mathfrak{g}]$, $n=\dim [\mathfrak{g},\mathfrak{g}]$. Note that this is a particular case of Definition \ref{def:vergne}.  A metabelian Lie algebra structure on $\mathfrak{g}$ is completely determined by the commutator map $\Lambda^2U\rightarrow  V$, where $V=[\mathfrak{g},\mathfrak{g}]$ and $U$ is a complement in $\mathfrak{g}$ (different complements determine different structures). Conversely, let $\mathfrak{g}=U\oplus V$ be a direct sum of two vector spaces $U$ and $V$ of dimensions $m$ and $n$. Then each skew symmetric bilinear surjective map $f:\Lambda^2U\rightarrow V$ determines a metabelian Lie algebra structure on $\mathfrak{g}$ of signature $(m,n)$. The space of maps $f$ is $\Lambda^2U^*\otimes V$, and the group $GL(U)\times GL(V)$ naturally acts on it.  Thus, a classification of metabelian Lie algebras can be understood  in terms of  the orbits of this group. In \cite{GT} the canonical elements $f$  determining  the orbits are found.} We will call such classification a  Galitskii-Timashev classification and the corresponding families of Lie algebras {\it the Galitskii-Timashev types}. Note that these types are determined over $\mathbb{C}$ and we condier algebraic tori in algebraic groups.
\begin{proposition}[\cite{GT}, Section 1.1]\label{metab} Let $\mathfrak{g}$ be a metabealian complex  Lie algebra determined by $f:\Lambda^2U\rightarrow V$. Then 
$$\operatorname{Aut}(\mathfrak{g})=G(f)\rtimes N$$
where $N$ is a unipotent subgroup, and $G(f)$ denotes the $GL(U)\times GL(V)$-stabilizer.
\end{proposition}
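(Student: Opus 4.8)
\textbf{Plan for the proof of Proposition \ref{metab}.}

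The plan is to follow the standard strategy for identifying the automorphism group of a Lie algebra with a distinguished ideal: realize $\operatorname{Aut}(\mathfrak{g})$ as the stabilizer of the defining tensor inside a larger linear group, then perform the Levi–Mostow-type decomposition into a reductive part and a unipotent part. First I would observe that since $[\mathfrak{g},\mathfrak{g}]=V$ is a characteristic ideal and $\mathfrak{g}/[\mathfrak{g},\mathfrak{g}]\cong U$, every automorphism $\Phi\in\operatorname{Aut}(\mathfrak{g})$ descends to a pair of linear maps: an automorphism $\bar\Phi\in GL(U)$ of the quotient and the restriction $\Phi|_V\in GL(V)$ to the derived ideal. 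This gives a homomorphism $\pi:\operatorname{Aut}(\mathfrak{g})\to GL(U)\times GL(V)$, and the metabelian condition $[\mathfrak{g},[\mathfrak{g},\mathfrak{g}]]=0$ forces $V$ to act trivially in all brackets, so the bracket is genuinely encoded by the single tensor $f\in\Lambda^2U^*\otimes V$.

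Next I would compute the image and kernel of $\pi$ separately. For the image: a pair $(a,b)\in GL(U)\times GL(V)$ lifts to an automorphism of $\mathfrak{g}$ precisely when it is compatible with the bracket, i.e. $b\circ f = f\circ\Lambda^2 a$; this compatibility condition is exactly the definition of the stabilizer $G(f)$ of $f$ under the natural $GL(U)\times GL(V)$-action on $\Lambda^2U^*\otimes V$, so $\operatorname{im}\pi = G(f)$. For the kernel: if $\Phi$ induces the identity on both $U$ and $V$, then $\Phi$ differs from the identity by a linear map $\mathfrak{g}\to\mathfrak{g}$ that kills $V$ and sends $U$ into $V$ — that is, an element of $\operatorname{Hom}(U,V)$, and one checks that any such map is automatically an automorphism (the induced correction to brackets vanishes because $[V,\mathfrak{g}]=0$). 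Hence $\ker\pi\cong\operatorname{Hom}(U,V)$ as a group under addition, which is abelian, in particular unipotent; call it $N$. Finally I would exhibit a splitting: choosing the complement $U\subset\mathfrak{g}$ fixed at the outset, each $(a,b)\in G(f)$ lifts to the automorphism $u\oplus v\mapsto a(u)\oplus b(v)$, and this lift is a group homomorphism $G(f)\to\operatorname{Aut}(\mathfrak{g})$ splitting $\pi$. Therefore $\operatorname{Aut}(\mathfrak{g})=G(f)\ltimes N$ with $N$ unipotent, which is the claimed statement (up to the harmless left/right convention on the semidirect product symbol).

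The main obstacle — really the only non-formal point — is verifying that every element of $\ker\pi$ is an honest automorphism and that $N$ is precisely $\operatorname{Hom}(U,V)$ rather than a proper subgroup of it: one must check that adding a map $\psi\in\operatorname{Hom}(U,V)$ to the identity neither destroys bijectivity (clear, since it is unipotent: $(\operatorname{id}+\psi)^{-1}=\operatorname{id}-\psi$ because $\psi^2=0$ as $\psi|_V=0$ and $\operatorname{im}\psi\subset V$) nor the bracket relation, and the latter is exactly where the metabelian hypothesis $[\mathfrak{g},[\mathfrak{g},\mathfrak{g}]]=0$ is used: the only potentially new bracket terms would be $[\psi(u),u']+[u,\psi(u')]$, which lie in $[V,\mathfrak{g}]=0$. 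Once this is in place, the identification of $N$ with a unipotent subgroup is immediate, and the whole decomposition follows from elementary group theory together with the definition of the stabilizer $G(f)$. Since this is precisely the content of \cite{GT}, Section 1.1, I would simply present the above as the natural verification and refer to \cite{GT} for the details of the Galitskii–Timashev orbit analysis that produces the explicit normal forms of $f$.
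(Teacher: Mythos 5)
Your proposal is correct and is essentially the standard argument behind the cited result: the paper itself gives no proof here, deferring entirely to \cite{GT}, Section 1.1, and your reconstruction (restriction to the characteristic ideal $V=[\mathfrak{g},\mathfrak{g}]$ and the induced map on $U\cong\mathfrak{g}/V$, identification of the image of $\pi$ with the stabilizer $G(f)$, identification of $\ker\pi$ with the additive unipotent group $\operatorname{Hom}(U,V)$, and the block-diagonal splitting) is exactly how that decomposition is obtained. The only point worth making explicit is that you use the surjectivity of $f$ (built into the paper's definition of the metabelian structure) to guarantee $V=[\mathfrak{g},\mathfrak{g}]$, so that $V$ really is characteristic and $\pi$ is well defined.
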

{\rm Thus, if the connected component of $G(f)$ of a metabelian Lie algebra $\mathfrak{n}$ contains an algebraic torus,  any semidirect sum $\mathfrak{g}=\mathfrak{a}\oplus_{\varphi}\mathfrak{n}$ given by Proposition \ref{prop:semidirect} admits an SNP invariant connection, provided that $\mathfrak{n}$ is a realification (denoted also as $\mathfrak{n}$) . The paper \cite{GT} contains examples of metabealian Lie algebras of both types, containing an algebraic  torus and not containing it. In greater detail we can describe the classification as follows. 
Treat $f\in \Lambda^2U^*\otimes V$ as a tensor. Choose bases $_{1}e,...,_{m}e$ of $U$ and $e_1,...,e_n$ of $V$, and write  the base of $\Lambda^2U^*\otimes V$ in the form
$$^{ij}e_k=e^i\wedge e^j\otimes e_k.$$ 
Note that dual elements are denoted by raising or lowering the indices. The tables in \cite{GT} contain the description of $f$ in the dual form, so  the following notation is used:
$$(abc...ijk) \,\,\,\text{stands for}\,\,\,_{ab}e_c+\cdots+_{ij}e_k.$$
Analysing Tables 1-8 in \cite{GT} we get a description of canonical elements $f$ and the connected components of their stabilizers. By what we have said the following holds.
\begin{proposition}\label{prop:(5,5)} Any metabelian Lie algebra over $\mathbb{C}$ of signature $(m,n)$ such that $m,n\leq 5$, or $m\leq 6,n\leq 3$ has an automorphism group which contains an algebraic  torus with the following exceptions determined by the canonical choice of tensor $f$:
\vskip6pt
for $m,n\leq 5$:
\begin{center}
\begin{tabular}{|c|} \hline
$132 \; 521 \; 415 \; 354$\\\hline
$125 \; 144 \; 153 \; 234 \; 243 \; 252 \; 342 \; 351$\\\hline
$125 \; 134 \; 153 \; 233 \; 243 \; 252 \; 342 \; 451$\\\hline
$125 \; 135 \; 144 \; 152 \; 234 \; 242 \; 251 \; 343$\\\hline
$125 \; 134 \; 143 \; 152 \; 233 \; 244 \; 342 \; 451$\\\hline
$125 \; 143 \; 154 \; 233 \; 242 \; 251 \; 341 \; 352$\\\hline
$125 \; 132 \; 144 \; 153 \; 234 \; 243 \; 252 \; 351$\\\hline
$125 \; 134 \; 141 \; 153 \; 243 \; 252 \; 342 \; 351$\\\hline
$121 \; 144 \; 153 \; 234 \; 243 \; 252 \; 342 \; 451$\\\hline
$125 \; 134 \; 143 \; 152 \; 233 \; 242 \; 251 \; 341$\\\hline
\end{tabular}
\end{center}
for $m\leq 6, n\leq 3$:
\begin{center}
\begin{tabular}{|c|} \hline
$531  \; 152  \; 313$\\\hline
$121  \; 342  \; 531  \; 152  \; 313$\\\hline
$143  \; 162  \; 233  \; 252  \; 351$\\\hline
$143  \; 162  \; 233  \; 252  \; 261  \; 342  \; 351$\\\hline
$153  \; 162  \; 233  \; 242  \; 252  \; 261  \; 341$\\\hline
$133  \; 152  \; 161  \; 243  \; 252  \; 342  \; 351$\\\hline
$143  \; 161  \; 233  \; 242  \; 251  \; 341  \; 352 $\\\hline
$133  \; 142  \; 153  \; 161  \; 243  \; 252  \; 341$\\\hline
$123  \; 141  \; 152  \; 242  \; 261  \; 351  \; 362$\\\hline
$143  \; 152  \; 161  \; 233  \; 242  \; 251  \; 341$\\\hline
\end{tabular}
\end{center}
In total, there are 223  Galitskii-Timashev classes of  metabelian Lie algebras and 20 of them do not have an algebraic  torus in the automorphism group. 
\end{proposition}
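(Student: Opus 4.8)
The plan is to reduce the statement to a finite inspection of the Galitskii--Timashev tables by means of Proposition~\ref{metab}. First I would record the following consequence of that proposition: if $\mathfrak g=U\oplus V$ is a metabelian Lie algebra with commutator map $f:\Lambda^2U\to V$, then $\operatorname{Aut}(\mathfrak g)=G(f)\rtimes N$ with $N$ unipotent, and a unipotent group contains no nontrivial torus; hence $\operatorname{Aut}(\mathfrak g)$ contains an algebraic torus of positive dimension if and only if the connected stabilizer $G(f)^{\circ}\subset GL(U)\times GL(V)$ is not unipotent, i.e.\ $\operatorname{rank}G(f)^{\circ}\ge 1$. Combined with the observation already made in the text --- that a positive-dimensional algebraic $\mathbb C$-torus in the automorphism group of a complex nilpotent Lie algebra yields, after realification, a positive-dimensional compact torus in the real automorphism group --- this is exactly the input Proposition~\ref{prop:semidirect} requires. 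So the whole proposition is equivalent to the assertion that, among the canonical tensors $f$ classified in \cite{GT} for signatures $(m,n)$ with $m,n\le 5$, or with $m\le 6$ and $n\le 3$, precisely the $20$ displayed ones have unipotent stabilizer.

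Next I would make the torus test explicit. Write a candidate diagonal one-parameter subgroup as $\bigl(\operatorname{diag}(t^{a_1},\dots,t^{a_m}),\operatorname{diag}(t^{b_1},\dots,t^{b_n})\bigr)\in GL(U)\times GL(V)$. Its action sends the basis tensor ${}^{ij}e_k=e^i\wedge e^j\otimes e_k$ to $t^{\,b_k-a_i-a_j}\,{}^{ij}e_k$, so this subgroup lies in $G(f)$ exactly when $b_k=a_i+a_j$ for every term ${}^{ij}e_k$ occurring in the support of the canonical tensor $f$. This linear system always admits the solution $a_i\equiv 1,\ b_k\equiv 2$ (the scalar torus $\lambda\mapsto(\lambda\,\mathrm{Id}_U,\lambda^2\,\mathrm{Id}_V)$, which is automatically in $G(f)$ since $f$ is bilinear); hence a nontrivial \emph{diagonal} torus in $G(f)$ exists as soon as the solution space of the system has dimension $\ge 2$. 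For all but the $20$ candidate-exceptional tensors one reads off from the support listed in Tables~1--8 of \cite{GT} an integer solution independent of $(1,\dots,1;\,2,\dots,2)$, exhibiting a $\mathbb{G}_m\hookrightarrow G(f)$ and settling those cases.

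Finally I would treat the remaining candidates: the canonical tensors $f$ for which the system $b_k=a_i+a_j$ over the support has only the trivial solution. For these one must still rule out a torus invisible in the chosen basis (for instance one sitting inside a semisimple part of $G(f)$); here I would invoke the explicit description of $G(f)^{\circ}$ computed in \cite{GT}, which for exactly the two blocks of tensors displayed in the statement is unipotent. Assembling the counts from Tables~1--8 for the indicated signatures gives $223$ Galitskii--Timashev classes in total, of which the $10+10$ listed are those with no algebraic torus in the automorphism group, and the proposition follows. The main obstacle is not conceptual but the bookkeeping: faithfully transcribing each canonical tensor under the $(abc\dots)$ index convention of \cite{GT}, solving the associated linear systems, and, in the $20$ borderline cases, matching the computed stabilizer against the unipotent ones recorded there.
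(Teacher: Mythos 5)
Your proposal follows essentially the same route as the paper: the paper offers no computation of its own but simply invokes Proposition~\ref{metab} and reads the canonical tensors and their stabilizers off Tables~1--8 of \cite{GT}, exactly the reduction you make; your diagonal weight-system $b_k=a_i+a_j$ is just a concrete way of organizing that table inspection, and your fallback to the stabilizers recorded in \cite{GT} for the borderline cases is what the paper does throughout. One caveat: as you yourself observe, the one-parameter subgroup $\lambda\mapsto(\lambda\,\mathrm{Id}_U,\lambda^2\,\mathrm{Id}_V)$ lies in \emph{every} $G(f)$, so read literally the statement ``$\operatorname{Aut}(\mathfrak g)$ contains an algebraic torus'' would have no exceptions at all; the proposition only makes sense for the stabilizer taken modulo this inessential scaling (which is how \cite{GT} tabulate it), and under that reading your ``solution space of dimension $\ge 2$'' criterion is the correct test --- but your phrasing that a nontrivial diagonal torus ``exists as soon as the dimension is $\ge 2$'' is off by one, since dimension $1$ is always attained and already yields a nontrivial torus. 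You should state explicitly that the torus being sought is one surviving in the reduced stabilizer; with that clarification the argument matches the paper's.
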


 }  

\begin{corollary}\label{cor:metab} Let $\mathfrak{n}$ be a realification of a complex metabelian Lie algebra of signature $(m,n)$, $m,n\leq 5$ or $m\leq 6,n\leq 3$ such that its automorphism group contains an algebraic torus according to Proposition \ref{prop:(5,5)}. Then any solvable Lie group whose Lie algebra is a semidirect extension  as in Proposition \ref{prop:semidirect} admits an SNP Weyl connection.
\end{corollary}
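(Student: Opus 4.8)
\textbf{Proof plan for Corollary \ref{cor:metab}.} The plan is to combine the structural result of Proposition \ref{prop:semidirect} with the torus-existence statement of Proposition \ref{prop:(5,5)}. The only thing that needs checking is that the hypotheses of Proposition \ref{prop:semidirect} are met, i.e. that the realification $\mathfrak{n}$ of the given complex metabelian Lie algebra has an automorphism group containing a compact subgroup (indeed a compact torus of positive dimension), and then the conclusion follows verbatim.

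First I would invoke Proposition \ref{prop:(5,5)}: since the signature $(m,n)$ of $\mathfrak{n}$ satisfies $m,n\le 5$ or $m\le 6$, $n\le 3$, and since by hypothesis $\mathfrak{n}$ is \emph{not} one of the twenty excluded Galitskii--Timashev types, the algebraic group $\operatorname{Aut}_{\mathbb{C}}(\mathfrak{n})$ contains a (non-trivial) algebraic torus $(\mathbb{C}^{*})^{k}$ with $k\ge 1$. Second, I would pass to the realification: an algebraic torus $(\mathbb{C}^{*})^{k}\subset \operatorname{Aut}_{\mathbb{C}}(\mathfrak{n})$ sits inside $\operatorname{Aut}_{\mathbb{R}}(\mathfrak{n}_{\mathbb{R}})$ (a $\mathbb{C}$-linear Lie algebra automorphism is in particular $\mathbb{R}$-linear and preserves the bracket), and inside $(\mathbb{C}^{*})^{k}$ the compact real subtorus $(S^{1})^{k}$ is a compact subgroup $S$ (indeed a torus $T$) of $\operatorname{Aut}(N_{\mathbb{R}})$ of positive dimension. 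Thus the hypothesis ``$\operatorname{Aut}(N)$ contains a compact subgroup $S$, and $T\subset S$ is a torus'' of Proposition \ref{prop:semidirect} holds.

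Third, I would apply Proposition \ref{prop:semidirect} directly: for the nilpotent Lie group $N$ with Lie algebra $\mathfrak{n}=\mathfrak{n}_{\mathbb{R}}$, and for \emph{any} abelian Lie algebra $\mathfrak{a}$ together with any linear map $\varphi:\mathfrak{a}\to\mathfrak{t}\subset\operatorname{Der}(\mathfrak{n})$, the semidirect sum $\mathfrak{g}=\mathfrak{a}\oplus_{\varphi}\mathfrak{n}$ carries a scalar product (obtained from a $T$-invariant scalar product on $\mathfrak{n}$, an arbitrary one on $\mathfrak{a}$, and declaring $\mathfrak{a}\perp\mathfrak{n}$) such that every $E\in\mathfrak{a}$ gives $E\perp[\mathfrak{g},\mathfrak{g}]$ and $\operatorname{ad}E|_{\mathfrak{n}}=\varphi(E)\in\mathfrak{t}$ skew-symmetric; by Theorem \ref{thm:tw-group} the pair $(\langle-,-\rangle,E)$ then determines an invariant SNP Weyl connection on the corresponding simply connected Lie group $G$. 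One should also note that $\mathfrak{g}$ can be arranged to be unimodular: since $\operatorname{ad}E|_{\mathfrak{n}}$ is skew-symmetric it is traceless, and $\mathfrak{n}$ (being nilpotent) is unimodular, so every such $\mathfrak{g}$ is automatically unimodular, matching the statement of the corollary.

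\textbf{Main obstacle.} There is essentially no hard step: the corollary is a formal consequence of two already-proven propositions, and the only subtlety is bookkeeping, namely making sure that ``algebraic torus over $\mathbb{C}$'' in Proposition \ref{prop:(5,5)} really does yield, after realification, a genuine compact torus of positive dimension in the real automorphism group (it does, via the maximal compact $(S^{1})^{k}\subset(\mathbb{C}^{*})^{k}$), and that the excluded list in Proposition \ref{prop:(5,5)} is exactly the complement of the cases we are allowed to use. So the ``proof'' is just: apply Proposition \ref{prop:(5,5)} to get the torus, then apply Proposition \ref{prop:semidirect} (hence Theorem \ref{thm:tw-group}) to produce the SNP Weyl connection.
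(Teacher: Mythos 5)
Your proposal is correct and follows essentially the same route as the paper: invoke Proposition \ref{prop:(5,5)} to obtain an algebraic torus in the automorphism group, pass to the realification to extract a compact torus of positive dimension, and then apply Proposition \ref{prop:semidirect} (via Theorem \ref{thm:tw-group}) to conclude. Your explicit justification that $(\mathbb{C}^{*})^{k}$ contributes a compact subtorus $(S^{1})^{k}$ of the real automorphism group, and the remark on unimodularity, only make explicit what the paper leaves implicit.
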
 
\begin{remark} {\rm  A classfication of metabelian Lie algebras in terms of generators and relations is obtained in \cite{Ga}. It would be interesting to find more general result than that of Proposition \ref{prop:(5,5)}.}
\end{remark}

\subsection{Solvable Lie groups which do not admit invariant SNP connections}
 Recall that a characteristically nilpotent Lie algebra is a nilpotent Lie algebra $\mathfrak{n}$ such that $\operatorname{Der}(\mathfrak{n})$ is nilpotent \cite{GK}. 
\begin{definition} {\rm W say that a nilpotent Lie algebra $\mathfrak{n}$ is {\it characteristically nilpotent of Dyer type}, if $\operatorname{Aut}(\mathfrak{n})$ is unipotent.}
\end{definition}
\begin{remark} {\rm An example of a characteristically nilpotent Lie algebra $\mathfrak{n}$ with unipotent $\operatorname{Aut}(N)$ was found by Dyer \cite{D}. More examples can be found in \cite{AC}. Dyer's example is a follows: it is a nine-dimensional Lie algebra spanned by $X_1,...,X_9$ with the Lie brackets}
$$[X_1,X_2]=X_3; \quad [X_1,X_3]=X_4; \quad  [X_1,X_5]=X_7; \quad [X_1,X_8]=X_9;$$
$$[X_2,X_3]=X_5; \quad [X_2,X_4]=X_7; \quad [X_2,X_5]=X_6; \quad [X_2,X_7]=-X_8;$$
$$[X_3,X_7]=-[X_4,X_5]=X_9.$$

\end{remark} 
\begin{proposition}\label{prop:charact} No solvable Lie group whose Lie algebra  is a semidirect product of an abelian and characteristically  nilpotent Lie algebra of Dyer type admits an invariant SNP Weyl connection.
\end{proposition}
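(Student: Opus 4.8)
The plan is to argue by contradiction, reducing the statement to the characterization of invariant SNP connections in Theorem~\ref{thm:tw-group} together with the one property of a characteristically nilpotent Lie algebra $\mathfrak{m}$ of Dyer type that is needed here: since $\operatorname{Aut}(\mathfrak{m})$ is unipotent, its Lie algebra $\operatorname{Der}(\mathfrak{m})$ consists of nilpotent endomorphisms of $\mathfrak{m}$. As in Theorem~\ref{thm:full-e}, I read ``admits an invariant SNP Weyl connection'' in the non-trivial sense, i.e.\ one determined by a \emph{non-central} $E\in\mathfrak{g}$; the goal is to show that any $E$ defining an SNP connection on such a $\mathfrak{g}$ must in fact be central.

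First I would fix notation: write $\mathfrak{g}=\mathfrak{a}\oplus_{\psi}\mathfrak{m}$ with $\mathfrak{a}$ abelian, $\mathfrak{m}$ characteristically nilpotent of Dyer type and $\psi\colon\mathfrak{a}\to\operatorname{Der}(\mathfrak{m})$. Note that $\mathfrak{g}$ is automatically unimodular: for $X\in\mathfrak{m}$ the operator $\operatorname{ad}_{\mathfrak{g}}X$ acts as $0$ on $\mathfrak{g}/\mathfrak{m}$ and as the nilpotent operator $\operatorname{ad}_{\mathfrak{m}}X$ on $\mathfrak{m}$, while for $A\in\mathfrak{a}$ it acts as $0$ on $\mathfrak{a}$ and as $\psi(A)\in\operatorname{Der}(\mathfrak{m})$ on $\mathfrak{m}$, which is nilpotent by the Dyer property; in either case the trace vanishes. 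Hence Theorem~\ref{thm:tw-group} applies, and if $G$ carried an invariant SNP Weyl connection there would be a scalar product $\langle-,-\rangle$ on $\mathfrak{g}$ and $E\in\mathfrak{g}$ with $\operatorname{ad}E$ skew-symmetric with respect to $\langle-,-\rangle$ (and $E\perp[\mathfrak{g},\mathfrak{g}]$, although only the skew-symmetry is used below).

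The central step is to show that $\operatorname{ad}E$ is a nilpotent endomorphism of $\mathfrak{g}$. Because $\mathfrak{m}$ is an ideal, $\operatorname{ad}E$ preserves $\mathfrak{m}$ and $\operatorname{ad}E|_{\mathfrak{m}}\in\operatorname{Der}(\mathfrak{m})$, hence $(\operatorname{ad}E|_{\mathfrak{m}})^{k}=0$ for some $k$. Since $\mathfrak{g}/\mathfrak{m}\cong\mathfrak{a}$ is abelian, $\operatorname{ad}E$ induces the zero map on $\mathfrak{g}/\mathfrak{m}$, i.e.\ $\operatorname{ad}E(\mathfrak{g})\subseteq\mathfrak{m}$. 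Combining the two facts, $(\operatorname{ad}E)^{k+1}(\mathfrak{g})\subseteq(\operatorname{ad}E|_{\mathfrak{m}})^{k}(\mathfrak{m})=0$, so $\operatorname{ad}E$ is nilpotent.

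Finally I would combine nilpotency with skew-symmetry: for every $X\in\mathfrak{g}$ one has $\langle\operatorname{ad}E(X),\operatorname{ad}E(X)\rangle=-\langle(\operatorname{ad}E)^{2}(X),X\rangle$, and running this identity down the chain of powers of $\operatorname{ad}E$ (which eventually vanishes) forces $\operatorname{ad}E=0$ — equivalently, a skew-symmetric endomorphism is semisimple, and a semisimple nilpotent endomorphism is zero. Thus $E$ is central, contradicting the hypothesis that the connection is determined by a non-central $E$, and the Lie group admits no invariant SNP Weyl connection. I do not anticipate a genuine obstacle; the only step that deserves an explicit word is the implication ``Dyer type $\Rightarrow$ every derivation of $\mathfrak{m}$ is a nilpotent endomorphism'', which is the standard fact that the Lie algebra of a unipotent algebraic group is contained in the nilpotent cone.
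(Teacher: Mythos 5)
Your proof is correct, and it reaches the conclusion by a more self-contained route than the paper, which disposes of Proposition~\ref{prop:charact} in one line as ``a corollary to Theorem~\ref{thm:full-e}''. The underlying mechanism is the same tension — skew-symmetry of $\operatorname{ad}E$ versus unipotence of the automorphism group — but the paper exploits it at the group level (a skew-symmetric $\operatorname{ad}_E|_{\mathfrak{n}}$ generates a subgroup whose closure is a compact torus in $\operatorname{Aut}(\mathfrak{n})$, and a unipotent group contains no such torus), whereas you work infinitesimally: $\operatorname{Der}(\mathfrak{m})$ lies in the nilpotent cone, so $\operatorname{ad}E$ is simultaneously nilpotent and skew-symmetric, hence zero. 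Your version buys three things. First, it invokes only Theorem~\ref{thm:tw-group} plus elementary linear algebra, rather than the packaged structure theorem. Second, it quietly repairs a mismatch in the paper's one-line argument: Theorem~\ref{thm:full-e} is phrased in terms of $\operatorname{Aut}(\mathfrak{n})$ for $\mathfrak{n}=[\mathfrak{g},\mathfrak{g}]$, while the Dyer hypothesis concerns $\operatorname{Aut}(\mathfrak{m})$ for the full nilpotent factor $\mathfrak{m}$, and these need not coincide; by restricting $\operatorname{ad}E$ to the ideal $\mathfrak{m}$ and using that this restriction lies in $\operatorname{Der}(\mathfrak{m})$, you avoid having to compare the two automorphism groups. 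Third, you actually check unimodularity of the semidirect product, which is a hypothesis of Theorem~\ref{thm:tw-group} that the paper leaves implicit. Your caveat about reading the statement in the ``non-central $E$'' sense is also the right one — it matches the convention of Theorem~\ref{thm:full-e}, and without it the literal statement would fail for a trivial reason (e.g.\ a direct product $\mathfrak{a}\oplus\mathfrak{m}$ has central elements orthogonal to $[\mathfrak{g},\mathfrak{g}]$, which are parallel and hence SNP). The only step worth tightening in a written version is the ``running the identity down the chain'' remark: the clean statement is that for a skew-symmetric $A$ one has $\ker A^{2}=\ker A^{T}A=\ker A$, so nilpotency forces $A=0$ — or, as you note, semisimple plus nilpotent equals zero.
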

\begin{proof} This is a corollary to Theorem \ref{thm:full-e}.
\end{proof}

\vskip10pt
Faculty of Mathematics and Computer Science,
\vskip10pt
\noindent University of Warmia and Mazury,
\vskip10pt
\noindent S\l\/oneczna 54, 10-710 Olsztyn, Poland
\vskip10pt
e-mail addresses:
\vskip6pt
mabo@matman.uwm.edu.pl (MB),
\vskip6pt
 piojas@matman.uwm.edu.pl (PJ),
\vskip6pt
 tralle@matman.uwm.edu.pl (AT)

\end{document}